\documentclass[a4paper,10pt,onecolumn,twoside]{article}
\usepackage{mathrsfs}
\usepackage{fancyhdr}
\usepackage[numbers,sort&compress]{natbib}
\usepackage{amsmath,amsfonts,amssymb,graphicx,amsthm}
\allowdisplaybreaks
\usepackage{color}
\usepackage{hyperref} 
\usepackage{subfigure} 
\graphicspath{{figures/}}         

\usepackage{tikz} 
\usetikzlibrary{math,calc,intersections,through,backgrounds,animations,arrows,patterns,decorations.pathmorphing,decorations.pathreplacing,decorations.shapes,quotes,angles}  
\tikzset{global scale/.style={
    scale=#1,
    every node/.append style={scale=#1}
  }
}  

\usepackage{xcolor} 

\numberwithin{equation}{section}

 \newtheorem{lemma}{Lemma}[section]
 \newtheorem{proposition}[lemma]{Proposition}
 \newtheorem{theorem}{Theorem}[section]
 
 \theoremstyle{remark}
 \newtheorem{remark}{Remark}[section]

\numberwithin{equation}{section}

\begin{document}

\title{\bf Gradient Catastrophe for Solutions to the Conservation Laws with Source Term}

%

\author{{\sc Qingsong Zhao}\thanks{School of Mathematics and Science, Nanyang Institute of Technology, Nanyang 473004, China. Email: qqsszhao@nyist.edu.cn}}

\date{}

\maketitle
\begin{abstract}

This paper studies singularity formation for conservation laws with a source term. Motivated by John (1974) and Barlin (2023), we prove finite-time blow-up under initial data conditions weaker than those in Barlin. Moreover, we show that a sufficiently small compact support length of the initial data promotes blow-up. Hence, global existence can only be achieved when the initial data have a large compact support length.
\\[2mm]
\noindent{\sc Key words:} Nonlinear hyperbolic systems, Conservation laws, Formation of singularities.
\\[2mm]
\noindent{\sc AMS Subject Classification:} 35L65, 35B44, 35L67.
\end{abstract}

\maketitle

\tableofcontents

%

\section{Introduction}

In this paper, we investigate the mechanism behind the formation of singularities of the conservation laws with source term of the form 
\begin{equation}\label{CLS}
\boldsymbol{u}_t+\boldsymbol{A}(\boldsymbol{u})\boldsymbol{u}_x={ \boldsymbol{g}(\boldsymbol{u})},
\end{equation}
where $\boldsymbol{A}:\Omega\to \mathbb{R}^{n\times n}$ and $\boldsymbol{g}:\Omega\to \mathbb{R}^{n}$ are smooth functions with $\Omega\in \mathbb{R}^n,$ satisfying:
\begin{enumerate}
\item [(A1)] $\boldsymbol{A}(\boldsymbol{0})$ has only real, simple eigenvalues;
\item [(A2)] all the eigenvalue of $\boldsymbol{A}$ is genuinely nonlinear at $\boldsymbol{u}=\boldsymbol{0}$;
\item [(A3)] $\boldsymbol{g}$ is a $C^2$ function, satisfying $\boldsymbol{g}(\boldsymbol{0})=\boldsymbol{0}$ and $\nabla_{\boldsymbol{u}}\boldsymbol{g}(\boldsymbol{0})=\boldsymbol{0}$.
\end{enumerate}
Source terms model a variety of physical effects such as external forces, relaxation toward equilibrium, chemical reactions, or source/sink terms. The presence of a source term introduces significant analytical and computational complexities, particularly in regimes where the source term is ``stiff" (i.e., changes much faster than the hyperbolic waves) or when solutions approach non-trivial equilibrium states.

In this paper we analyze whether blow-up behavior exists for the conservation laws \eqref{CLS} with source term when the initial data $\boldsymbol{u}(0,x)=\boldsymbol{u}_0(x)$ has compact support $\mathrm{supp}\ \boldsymbol{u}_0\subset [\alpha_0,\beta_0]$.

For the construction of blow-up solutions to hyperbolic conservation laws, we refer the readers to \cite{Barlin,Barlin-arxiv,Hormander,Hu-JDE-2022,Hu-MN-2019,John-JDE-1979,Lax,Li-Liu-2009,Liu,Zhao}. Among these results, the most classic is F. John's result in \cite{John-JDE-1979}. F. John  proved that if $\mathrm{(A1)}$ and $\mathrm{(A2)}$ hold, and the initial data $\boldsymbol{u}_0(x)\in C^2(\mathbb{R})$ has compact support, then, denoting
$$\mathrm{supp}\ \boldsymbol{u}_0\subset [\alpha_0,\beta_0]=I_0, \quad\quad s_0=\beta_0-\alpha_0<+\infty,\quad \quad
\theta=s_0^2\sup_{x}|\boldsymbol{u}_0^{\prime\prime}|,$$
there exists a positive constant $\theta_0=\theta_0(\boldsymbol{A},\boldsymbol{u}_0)$ such that when $0<\theta<\theta_0$, the solutions to the hyperbolic conservation law \eqref{CLS} without source term (i.e., $\boldsymbol{g}(\boldsymbol{u})=\boldsymbol{0}$) blow up in finite time. 

Taking the source term into account, J. B{\"a}rlin in \cite{Barlin} studied the blowup result of conservation laws \eqref{CLS}. Let the initial data $\boldsymbol{u}_0$ take the form
\begin{equation}\label{initial-data}
\boldsymbol{u}_0(x)=\boldsymbol{U}(\varepsilon\alpha(\varepsilon^{-1}\kappa^{-1}x)),
\end{equation}
where $\boldsymbol{U}$ satisfies $\boldsymbol{U}(0)=\boldsymbol{0}$ and the differential equation
\begin{equation}\label{eq-U}
\boldsymbol{U}^\prime(\xi)=\boldsymbol{r}_p(\boldsymbol{U}(\xi)).
\end{equation}
Here the function $\alpha\in C_c^\infty(\mathbb{R})$ satisfies ${\rm supp}\ \alpha\subset \left(-\frac12,\frac12\right)$ and $\max \alpha^\prime>0.$ The constants $\varepsilon$ and  $\kappa$ are sufficiently small. Then the solution to the hyperbolic conservation law  with source term \eqref{CLS} and initial data \eqref{initial-data} blows up in finite time, and its maximum existence time $T$ satisfies 
\begin{equation*}
T<T_\kappa:=\frac34\kappa\overline{T}.
\end{equation*}
Here $\overline{T}=\frac{4}{\gamma_{ppp}(\boldsymbol{0})\max \alpha^\prime}.$

In this paper, we first extend the initial data \eqref{initial-data} and remove the restriction which needs that $\boldsymbol{U}$ satisfies the differential equations \eqref{eq-U}.

\begin{theorem}\label{thmmain}
Suppose that $\boldsymbol{u}_0(x)\in C^2(\mathbb{R})$ has compact support and there exists a positive constant $\delta$ such that $\|\boldsymbol{u}^\prime_0(x)\|_{L^\infty(\mathbb{R})}\geq \delta.$  Denote
$$\mathrm{supp}\ \boldsymbol{u}_0\subset [\alpha_0,\beta_0]=I_0, \quad\quad s_0=\beta_0-\alpha_0<+\infty,$$
$$\theta_0=\left(1+s_0^{-\frac12}\right)\left(1+\left(\sup_{x}|\boldsymbol{u}_0^\prime(x)|\right)^{-1}\right)s_0^2\sup_{x}|\boldsymbol{u}_0^{\prime\prime}(x)|.$$
Then there exists a positive number $\theta_\ast=\theta_\ast(\boldsymbol{A},\boldsymbol{u}_0  ),$ such that if  $0<\theta_0<\theta_\ast$, the solution of the conservation laws \eqref{CLS} with initial data $\boldsymbol{u}(0,x)=\boldsymbol{u}_0(x)$ blows up in finite time.
\end{theorem}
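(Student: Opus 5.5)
We follow F. John's scheme for compactly supported data and track the additional terms produced by the source $\boldsymbol{g}$. Assumption (A3) makes these terms quadratic in $\boldsymbol{u}$.

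\emph{Characteristic decomposition.} By (A1) we diagonalize near $\boldsymbol{u}=\boldsymbol{0}$. Let $\lambda_1(\boldsymbol{u})<\dots<\lambda_n(\boldsymbol{u})$ be the eigenvalues, with right eigenvectors $\boldsymbol{r}_i$ and left eigenvectors $\boldsymbol{l}_i$ normalized so that $\boldsymbol{l}_i\boldsymbol{r}_j=\delta_{ij}$. Set $w_i=\boldsymbol{l}_i(\boldsymbol{u})\boldsymbol{u}_x$ and $v_i=\boldsymbol{l}_i(\boldsymbol{u})\boldsymbol{u}$. Differentiating \eqref{CLS} in $x$ gives John's Riccati-type system
\begin{equation*}
w_i^\prime=-\gamma_{iii}(\boldsymbol{u})w_i^2-\sum_{(j,k)\neq(i,i)}\gamma_{ijk}(\boldsymbol{u})w_jw_k+\sum_{j}\boldsymbol{l}_i\nabla_{\boldsymbol{u}}\boldsymbol{g}(\boldsymbol{u})\boldsymbol{r}_j\,w_j+\dots,
\end{equation*}
where $'$ is differentiation along the $i$-th characteristic and $\gamma_{ijk}$ are John's interaction coefficients. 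Genuine nonlinearity (A2) gives $\gamma_{iii}(\boldsymbol{0})\neq0$. By (A3), $|\nabla_{\boldsymbol{u}}\boldsymbol{g}(\boldsymbol{u})|\le C|\boldsymbol{u}|$ and $|\boldsymbol{g}(\boldsymbol{u})|\le C|\boldsymbol{u}|^2$, so every source contribution carries at least one extra factor of $|\boldsymbol{u}|$. The same holds for the equation of $v_i$: $v_i^\prime=O(|\boldsymbol{u}||\boldsymbol{u}_x|)+O(|\boldsymbol{u}|^2)$. Since $\boldsymbol{u}_0$ is supported in $I_0$, we have $|\boldsymbol{u}_0|\le s_0\sup|\boldsymbol{u}_0^\prime|$. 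In the scaling suggested by $\theta_0$, the source terms then look like a perturbation of size $O(s_0\cdot\text{amplitude})$ per unit time.

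\emph{A priori estimates on $[0,T]$ by a bootstrap.} Assume the $C^1$ solution exists on $[0,T]$. Following John, we split the strip into the regions where the characteristic fields have separated. After time $t_1\sim s_0/\min|\lambda_i-\lambda_j|$ the support of $\boldsymbol{u}$ consists of $n$ disjoint simple-wave strips, and each of them has width $O(s_0)$. We prove the following by continuous induction:
\begin{enumerate}
\item $\sup|\boldsymbol{u}|\le C s_0 W_0$, where $W_0=\sup|\boldsymbol{u}_0^\prime|$;
\item $\sup|w|\le CW_0$;
\item in the region outside the $i$-th strip, $w_i$ is small of order $\theta_0 W_0$.
\end{enumerate}
The source terms enter these integrals through factors such as $\int_0^T |\boldsymbol{u}|\,dt$. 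We control them using the compact support and finite propagation speed, which keep all estimates inside $[\alpha_0+\lambda_{\min}t,\beta_0+\lambda_{\max}t]$. We also need $T\lesssim 1/W_0$, the expected blow-up time. Hence $\int_0^T|\boldsymbol{u}|\lesssim s_0W_0\cdot W_0^{-1}=s_0$. Together with the $s_0^2\sup|\boldsymbol{u}_0''|$ bound on the interaction terms, this is precisely where the weights $(1+s_0^{-1/2})$ and $(1+W_0^{-1})$ in $\theta_0$ are used: they absorb the source and cross-interaction corrections into the smallness condition $\theta_0<\theta_\ast$.

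\emph{Riccati blow-up and the main difficulty.} The hypothesis $\|\boldsymbol{u}_0^\prime\|_{L^\infty}\ge\delta$ guarantees a point $x_0$ and an index $p$ with $|w_p(0,x_0)|\ge c\delta$ and $\gamma_{ppp}w_p(0,x_0)<0$. If only the wrong sign occurs at $x_0$, we use that $\int w_p\,dx\approx0$ for compactly supported data; up to errors controlled by $\theta_0$, this produces a point with the correct sign. Along the $p$-characteristic from $x_0$ we get
\[
w_p^\prime\le -\gamma_{ppp}(\boldsymbol{0})w_p^2\,(1-C\theta_0)+C\theta_0W_0^2.
\]
Comparison with the Riccati ODE then forces $|w_p|\to\infty$ before $T^\ast\approx 1/(\gamma_{ppp}(\boldsymbol{0})\,c\,\delta)$, which contradicts global existence of a $C^1$ solution. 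The main obstacle is the bootstrap step. We must show that the source terms, which are not of interaction type and do not decay after the waves separate, stay perturbative up to the blow-up time. The plan is to first establish the a priori bound $T\le C/W_0$, by arguing by contradiction as above. Then, on $[0,C/W_0]$, we apply Gronwall to the linear-in-$w$ source term, whose coefficient $C|\boldsymbol{u}|\le Cs_0W_0$ has time integral $O(s_0)$. Finally we choose $\theta_\ast$ small enough that $e^{Cs_0}$-type factors and $C\theta_0$ errors cannot destroy the sign of the Riccati coefficient.
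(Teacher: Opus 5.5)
There is a genuine gap in your a priori scheme. Item 2 claims $\sup|w|\le CW_0$ on $[0,T]$, with $T$ of the order of the blow-up time, so it bounds exactly the quantity that must become infinite. The continuity argument cannot reach $T$. Under the bootstrap hypothesis $|w|\le 2CW_0$, the equation only returns $|w|\le W_0+4\Gamma C^2W_0^2t+\dots$, which is of size $C^2W_0$ at $t\sim1/W_0$. Moreover, along the $p$-characteristic the Riccati lower bound you use later pushes $|w_p|$ past $CW_0$ strictly before $T$.

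Your bounds on $|\boldsymbol u|$, on the strip widths and on the cross terms all rest on item 2, so the bootstrap has no foundation. The missing idea is John's: never estimate $w_i$ pointwise inside its own strip $R_i$.
\begin{itemize}
\item Control instead $J(t)=\sup_i\int_{\alpha_i(t)}^{\beta_i(t)}|w_i|\,dx$ by the line-integral (Green's formula) estimate of Lemma \ref{lem:dw}. There the integrand $\sum\Gamma_{ijk}w_jw_k+\sum g_{ik}w_k$ has $\Gamma_{ijj}=0$, so no $w_i^2$ term appears.
\item Bound $w_i$ pointwise only off $R_i$ (the quantity $V$). Because $\gamma_{imm}=0$ for $m\neq i$, crossing $R_m$ produces only terms linear in $w_m$. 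Their integral along the crossing arc is again controlled by Lemma \ref{lem:dw} (the term $\mathcal I_m$ in Lemma \ref{lem:V}).
\item Then bound $S$, $U$ and $G$ through $J$ and $V$ (Lemmas \ref{lem:U}--\ref{lem:S}), and close the system as in Proposition \ref{prop:main}.
\end{itemize}
These estimates stay valid however large $w_i$ becomes inside $R_i$, which is what lets the Riccati comparison run up to blow-up.

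Your blow-up step also uses the wrong scale. The correct-sign point produced by $\int w_p\,dx\approx0$ need not have size $\gtrsim\delta$. John's argument only gives $s_0W_0^2\lesssim\theta_2W_0^+$, and $W_0^+$ can be much smaller than $W_0$. The life span is therefore of order $1/W_0^+$; the paper takes $T=17/(\gamma_{iii}(\boldsymbol 0)W_0^+)$. Every error must then be small relative to $W_0^+$. The paper proves $V\lesssim\theta_1W_0^+$ and $G\lesssim\theta_0^2W_0^+$, so its perturbation is $O(\theta_1\mathcal W^2+\theta_0^2W_0^+\mathcal W)$. Your additive error $C\theta_0W_0^2$, by contrast, is not shown to lie below $\gamma_{ppp}(\boldsymbol 0)(W_0^+)^2$.

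This is where the weights in $\theta_0$ genuinely enter, via $s_0W_0+s_0W_0^2\lesssim\theta_1W_0^+$ and $s_0W_0+s_0W_0^2\lesssim\theta_0^2(W_0^+)^2$ (Lemma \ref{lem:sW+sW2}). Your estimate $\int_0^T|\boldsymbol u|\,dt\lesssim s_0$ must also be redone with $T\sim1/W_0^+$. Finally, with a source term $\boldsymbol u$ does not vanish between the separated strips. So $|\boldsymbol u|$ also receives the contribution $(s_0+T)V$ from that region, as in Lemma \ref{lem:U}.
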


\begin{remark}
The smallness of $\theta_0$ implies that $s_0=\beta_0-\alpha_0$ is small. More precisely, notice that
\begin{align*}
s_0^2\|\boldsymbol{u}^{\prime\prime}_0\|_{L^\infty(\mathbb{R})}=&\theta_0\left(1+s_0^{-\frac12}\right)^{-1}\left(1+\|\boldsymbol{u}_0^\prime\|^{-1}_{L^\infty(\mathbb{R})}\right)^{-1}\\
\leq& \theta_0 s_0^{\frac12}\|\boldsymbol{u}_0^\prime\|_{L^\infty(\mathbb{R})}
\leq \theta_0 s_0^{\frac32}\|\boldsymbol{u}_0^{\prime\prime}\|_{L^\infty(\mathbb{R})}.
\end{align*}
Hence, we have 
\begin{equation}\label{small-theta}
s_0\leq \theta_0^2.
\end{equation}
We shall explain why the smallness of $s_0$ is indispensable in Theorem \ref{thm:2}.
\end{remark}
 
\begin{remark}
For any $\alpha\in C_c^\infty(\mathbb{R}),$  $\mathrm{supp}\ \alpha(x)\subset [-\frac12,\frac12]$ and $\boldsymbol{U}\in C^2(\mathbb{R}),$ if we take $\boldsymbol{u}_0(x)=\boldsymbol{U}(\varepsilon\alpha(\varepsilon^{-\ell}x)),$ where $\ell>1,$ 
it holds that
\begin{equation*}
s_0\lesssim \varepsilon^\ell,\quad\quad\quad|\boldsymbol{u}_0^\prime|\lesssim \varepsilon^{1-\ell},\quad\quad\quad |\boldsymbol{u}_0^{\prime\prime}|\lesssim \varepsilon^{1-2\ell}.
\end{equation*}
Then $$\theta_0\lesssim \varepsilon^{\frac{\ell}2},$$
which implies $\theta_0$ goes to zero as long as $\varepsilon$ is small enough. Hence, $\theta_*$ can be chosen as
\begin{equation*}
\theta_*=C_*\varepsilon^{\frac{\ell }{2}},
\end{equation*}
where $C_*$ is a constant related to $\boldsymbol{A}(\boldsymbol{u}),\boldsymbol{U}(x)$ and $\alpha(x).$
\end{remark}
\begin{remark}
For the initial data with small $C^2-$norm, it is known that the solutions to the conservation laws exist globally. Therefore, when considering blow-up solutions, a natural assumption --- as in \cite{John-JDE-1979} --- is that the initial function $\boldsymbol{u}_0(x)$ itself is small, while its derivative $\boldsymbol{u}_0^\prime(x)$ is large. Hence, we assume in Theorem \ref{thmmain} that $\|\boldsymbol{u}_0^\prime(x)\|_{L^\infty(\mathbb{R})}$ has positive lower bound. 
\end{remark}
 
A classic example is the scalar equation 
$$u_t+uu_x=u^{1+\ell},$$
where $\ell>0.$ The source term on the right-hand side is superlinear. One can construct blow-up solutions of Barenblatt-type or self-similar type, proving  that for specific initial data, the solution blows up in finite time\cite{Chen-Yang-AA-2024}.

It is well known that John’s classical blow-up result for solutions to one dimensional conservation laws in \cite{John-JDE-1979} does not require any smallness condition on the length $s_0$ of the compact support of the initial data. However, both B\"{a}rlin’s work \cite{Barlin} and the present paper impose a smallness condition on $s_0$. Next, we will take a single conservation law as an example to explain in detail the reason for this. Actually, we give a necessary and sufficient condition for the global existence or finite time blowup for the solution  of a single conservation law with source term along a characteristic curve.

Let $k>0.$ Consider the following conservation law with a source term
\begin{equation}\label{CL1}
u_t+uu_x=ku^2,
\end{equation}
whose initial data 
\begin{equation}\label{initial}
u(0,x)=u_0(x)
\end{equation}
is a non-identically-zero function with compact support in the interval $[\alpha_0,\beta_0].$ Let $X(t,z)$ be the characteristic curve starting from $z$. 
We show that the smallness of $s_0$ is indispensable.

\begin{theorem}\label{thm:2}
Let $u_0(x)$ be a non-identically-zero function with compact support in the interval $[\alpha_0,\beta_0].$
Let $s_0=\beta_0-\alpha_0$ and $z_0$ be a global minimum point of $u_0^\prime(x)$. Denote by $u(t,x)$ the solution of equation \eqref{CL1} with initial datum $u_0(x).$ 
\begin{enumerate}
\item [(1)] If $s_0\leq k^{-1},$ then $u(t,x)$ or $u_x(t,x)$ must blow up on the characteristic curve $X(t,z_0)$.
\item [(2)] If $s_0> k^{-1},$ then there exists a certain $u_0(x),$ such that $u(t,x)$ and $u_x(t,x)$ exist globally on the characteristic curve $X(t,z_0)$.
\end{enumerate}
\end{theorem}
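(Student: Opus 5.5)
The plan is to integrate the equation exactly along the characteristic $X(t,z_0)$, which reduces everything to ODEs for $u$ and $u_x$ along that curve. We then read off both parts of Theorem~\ref{thm:2} from the explicit formulas.

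\textbf{Step 1: the ODEs along the characteristic.} Along $\frac{dX}{dt}=u$, equation \eqref{CL1} gives $\frac{du}{dt}=ku^2$. Hence, writing $u_0=u_0(z)$,
$$u(t,X(t,z))=\frac{u_0}{1-ku_0t}.$$
Differentiating \eqref{CL1} in $x$, the quantity $v=u_x$ satisfies $v_t+uv_x=-v^2+2kuv$. Along the characteristic this is a Bernoulli equation $\frac{dv}{dt}=-v^2+2kuv$. Substituting $w=1/v$ turns it into the linear equation $w'=1-2kuw$.

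The integrating factor is $\exp\!\left(\int_0^t 2ku\,ds\right)=(1-ku_0t)^{-2}$. Integrating and inverting then gives, with $v_0=u_0'(z)$,
$$u_x(t,X(t,z))=\frac{v_0}{(1-ku_0t)\bigl(1+(v_0-ku_0)t\bigr)}.$$
So $u$ or $u_x$ blows up on $X(t,z)$ at some finite $t>0$ if and only if one of the following holds:
\begin{itemize}
\item[(a)] $u_0(z)>0$, so that the factor $1-ku_0t$ vanishes;
\item[(b)] $v_0-ku_0(z)<0$, so that the factor $1+(v_0-ku_0)t$ vanishes.
\end{itemize}
If neither holds, both quantities stay bounded for all $t\ge 0$.

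\textbf{Step 2: part (1).} Take $z=z_0$, a global minimum point of $u_0'$. Since $u_0\not\equiv0$ has compact support, $v_0=u_0'(z_0)<0$.

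If $u_0(z_0)\ge0$, then $v_0-ku_0(z_0)<0$ and we are in case (b).

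If $u_0(z_0)<0$, we use $u_0(\alpha_0)=0$ and $u_0'\ge v_0$ to get
$$u_0(z_0)=\int_{\alpha_0}^{z_0}u_0'\,dy\ \ge\ (z_0-\alpha_0)v_0\ \ge\ s_0v_0.$$
Hence $ku_0(z_0)\ge ks_0v_0\ge v_0$ whenever $ks_0\le1$.

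The delicate point is to make this inequality strict. Equality throughout would force $ks_0=1$, $z_0=\beta_0$, and $u_0'\equiv v_0$ on $[\alpha_0,\beta_0]$. This contradicts $u_0'(\alpha_0)=0$, which holds because $u_0$ is $C^1$ with compact support. Therefore $v_0-ku_0(z_0)<0$ and blow-up occurs.

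\textbf{Step 3: part (2), the construction.} For $s_0>k^{-1}$, choose $L$ with $k^{-1}<L<s_0$ and $c>0$. Build $u_0$ as follows:
\begin{itemize}
\item on $[\alpha_0,\alpha_0+L]$, $u_0'$ is a smooth profile close to $-c$ that is slightly decreasing, so that its unique global minimum $z_0$ sits near $\alpha_0+L$;
\item on the remaining interval $[\alpha_0+L,\beta_0]$, $u_0$ rises back to $0$;
\item $u_0'$ is smoothed near the endpoints so that $u_0\in C^2$ with support in $[\alpha_0,\beta_0]$.
\end{itemize}
The return on $[\alpha_0+L,\beta_0]$ must be done with $u_0'$ bounded below by a value larger than the minimum near $z_0$. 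This is possible if $c$ is taken large relative to the length $s_0-L$ available for the rise, or if the rise is made with positive derivative.

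Then $u_0(z_0)\approx -cL<0$, so case (a) fails. Moreover $v_0-ku_0(z_0)\approx -c+kcL=c(kL-1)>0$, so case (b) fails as well. By Step~1, $u$ and $u_x$ remain bounded for all $t\ge0$ on $X(t,z_0)$.

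The main obstacle is this construction: the smoothing errors must be controlled so that the global minimum of $u_0'$ is genuinely attained at the end of the ramp and the strict inequality $v_0>ku_0(z_0)$ survives.
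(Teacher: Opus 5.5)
Your proposal is correct and follows the paper's proof. You integrate the Riccati equation for $u_x$ along $X(t,z_0)$ to obtain the same product formula. Part (1) rests on $|u_0(z_0)|<s_0|u_0'(z_0)|$, which is the paper's Lemma~\ref{lem:f}; you inline it with a valid strictness argument. Part (2) uses the same kind of near-extremal ``long ramp, then return'' profile as the paper.

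Your slightly decreasing ramp is a small genuine refinement, because it makes the minimizer of $u_0'$ unique. After mollification, the paper's profile has $f_\delta'=-m$ on a whole interval $[\alpha+\varepsilon,x_0-\varepsilon]$, not at $x_0$ itself, so its conclusion holds only for minimizers near the right end of that interval.

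The size of $c$ plays no role: both conditions, $u_0(z_0)\le 0$ and $u_0'(z_0)\ge k\,u_0(z_0)$, are invariant under $u_0\mapsto\lambda u_0$ with $\lambda>0$.
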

\begin{remark}

Theorem \ref{thm:2} states that for a global solution to the conservation law \eqref{CL1} to exist, the initial data must have a sufficiently large compact support length; otherwise, if the compact support length is small enough, the solution necessarily blows up in finite time.
\end{remark}

Our chapter arrangement is as follows. In Section 2, we explore what properties of the initial data can be obtained from the smallness of $\theta_0$ in Theorem \ref{thmmain} and the classic results on hyperbolic conservation laws. In Section 3, we give the proof of Theorem \ref{thmmain}. In Section 4, we give the proof of Theorem \ref{thm:2}.

\noindent\textit{Notations.} The inequality  $f\lesssim g$ means $f(x)\leq Cg(x),$ where $C$ is a constant depending only on $\boldsymbol{A}$ and the initial data $\boldsymbol{u}_0(x).$

\section{Preliminary}
In this section, we review the classic results on hyperbolic conservation laws.



At the beginning of this section, we explore what properties of the initial data can be obtained from the smallness of $\theta_0$ in Theorem \ref{thmmain}. For the initial data $\boldsymbol{u}_0,$ we define
\begin{equation*}
W_0=\sup_{i,x}|w_i(0,x)|,\quad\quad\quad W_0^+=\sup_{i,x}w_i(0,x).
\end{equation*}
Let 
\begin{equation*}
\theta_2=s_0^2\sup_x|\boldsymbol{u}_0^{\prime\prime}|,\quad\quad\quad \theta_1=\left(1+\left(\sup_{x}|\boldsymbol{u}_0^\prime(x)|\right)^{-1}\right)\theta_2,
\end{equation*}
then it is obvious that
\begin{equation}\label{2}
|\boldsymbol{u}_0^{\prime\prime}|\leq \theta_2 s_0^{-2},\quad\quad\quad 
|\boldsymbol{u}_0^{\prime}|\leq \theta_2 s_0^{-1},\quad\quad\quad 
|\boldsymbol{u}_0|\leq \theta_2.
\end{equation}

We now give the relationship between $\theta_i \ (i=0,1,2)$ and the constants $W_0$ and $W_0^+.$
\begin{lemma}\label{lem:prop-theta}
The constants $\theta_0, \theta_1$, $\theta_2$ and $W_0,$ $W_0^+$ satisfy the following properties:
\begin{equation}\label{dif-theta}
\theta_0\geq\theta_1\geq\theta_2,
\end{equation}
\begin{equation}\label{bound-W}
s_0W_0^2\lesssim \theta_2W_0^+,
\end{equation}
and
\begin{equation}\label{bound-theta2}
\theta_2\lesssim \theta_0^2 W_0^+,
\end{equation}
\end{lemma}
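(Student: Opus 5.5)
Here $w_i$ denotes the Riemann-invariant-type derivatives in John's framework: $w_i = \boldsymbol{l}_i(\boldsymbol{u})\boldsymbol{u}_x$, with $\boldsymbol{l}_i$ the normalized left eigenvectors of $\boldsymbol{A}$. All three claims follow from elementary properties of compactly supported $C^2$ functions. I plan to prove them in the order stated.

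\emph{Step 1: the chain \eqref{dif-theta}.} Since $1+s_0^{-1/2}\ge 1$ and $1+\|\boldsymbol{u}_0'\|_{L^\infty}^{-1}\ge 1$, and the three quantities differ only by these nonnegative factors, the inequality $\theta_0\ge\theta_1\ge\theta_2$ is immediate.

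\emph{Step 2: the bound \eqref{bound-W}.} By the definition of $w_i$ and the boundedness of $\boldsymbol{u}_0$ (from \eqref{2}), $|w_i(0,x)|\lesssim|\boldsymbol{u}_0'(x)|$. Conversely, the $\boldsymbol{l}_i$ form a basis, so $|\boldsymbol{u}_0'|\lesssim\max_i|w_i(0,x)|$. This gives $W_0\lesssim\theta_2 s_0^{-1}$, and hence $s_0W_0^2\lesssim\theta_2W_0$.

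It then remains to show $W_0\lesssim W_0^+$. Here I will use compact support: $\int_{\mathbb{R}}\boldsymbol{u}_0'\,dx=0$, and I would like the analogous statement for $w_i$. Write $w_i=(\boldsymbol{l}_i(\boldsymbol{u}_0)\boldsymbol{u}_0)'-(\boldsymbol{l}_i(\boldsymbol{u}_0))'\boldsymbol{u}_0$. The last term is $O(|\boldsymbol{u}_0||\boldsymbol{u}_0'|)=O(\theta_2|\boldsymbol{u}_0'|)$, so
\[
\Bigl|\int w_i(0,x)\,dx\Bigr|\lesssim\theta_2\int|\boldsymbol{u}_0'|\,dx .
\]
Splitting into positive and negative parts gives $\int w_i^-\le\int w_i^++C\theta_2\int|\boldsymbol{u}_0'|$. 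This bounds the $L^1$ mass of $w_i^-$ by $s_0W_0^++C\theta_2 s_0W_0$.

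To turn $L^1$ control into pointwise control, I will use that $w_i'$ is bounded by $C(|\boldsymbol{u}_0''|+|\boldsymbol{u}_0'|^2)\lesssim\theta_2s_0^{-2}$. If $w_i$ attains a value $-M$, it stays below $-M/2$ on an interval of length $\gtrsim Ms_0^2/\theta_2$, so its negative mass is $\gtrsim M^2s_0^2/\theta_2$. Combining the two estimates gives
\[
M^2 s_0\lesssim\theta_2\bigl(W_0^++\theta_2W_0\bigr).
\]
Taking the maximum over $i$ and absorbing the $\theta_2 W_0$ term (via $W_0^2s_0$ on the left and smallness) yields $s_0W_0^2\lesssim\theta_2W_0^+$ directly. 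This route is cleaner, and I would present it rather than the naive $W_0\lesssim W_0^+$. The main obstacle is the absorption: the error term requires $\theta_2$ small and uses $W_0\lesssim\theta_2/s_0$.

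\emph{Step 3: the bound \eqref{bound-theta2}.} By the lower bound $\|\boldsymbol{u}_0'\|\ge\delta$ and the equivalence $|\boldsymbol{u}_0'|\sim\max_i|w_i|$, we have $W_0\gtrsim\|\boldsymbol{u}_0'\|_{L^\infty}$. Also $\theta_2=s_0^2\|\boldsymbol{u}_0''\|\le\theta_0 s_0^{1/2}\|\boldsymbol{u}_0'\|$, as computed in the remark following Theorem \ref{thmmain}. Hence $\theta_2\lesssim\theta_0 s_0^{1/2}W_0$. Squaring and applying \eqref{bound-W} gives
\[
\theta_2^2\lesssim\theta_0^2 s_0W_0^2\lesssim\theta_0^2\theta_2W_0^+ .
\]
Dividing by $\theta_2>0$ yields $\theta_2\lesssim\theta_0^2W_0^+$.
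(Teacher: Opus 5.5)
Your Steps 1 and 3 are fine and coincide with the paper. \eqref{dif-theta} is immediate, and Step 3 is exactly the paper's computation $\theta_2/\theta_0\le s_0^{1/2}\|\boldsymbol{u}_0'\|_{L^\infty}\lesssim\sqrt{s_0W_0^2}\lesssim\sqrt{\theta_2W_0^+}$. You are slightly more careful than the paper in distinguishing $\|\boldsymbol{u}_0'\|_{L^\infty}$ from $W_0$, and the lower bound $\delta$ is not needed there.

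For \eqref{bound-W} the paper gives no proof and simply cites John. Your mass-balance plus Lipschitz argument is the right kind of proof, but the absorption step does not close as written. The problem is the estimate $|\boldsymbol{u}_0|\le\theta_2$. It makes the error in $\int w_i(0,x)\,\mathrm{d}x$ of size $C\theta_2 s_0W_0$, which becomes $\theta_2^2W_0$ in $M^2s_0\lesssim\theta_2W_0^++\theta_2^2W_0$.

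To absorb $\theta_2^2W_0$ into $s_0W_0^2$ you would need $\theta_2^2\ll s_0W_0$, which is a \emph{lower} bound on $s_0W_0$. The fact $W_0\lesssim\theta_2/s_0$ that you invoke is an upper bound and points the wrong way. Indeed, your three facts $s_0W_0^2\lesssim\theta_2W_0^++\theta_2^2W_0$, $s_0W_0\lesssim\theta_2$ and $W_0^+\le W_0$ are all compatible with $W_0=1$, $s_0=W_0^+=\theta_2^2$. There $s_0W_0^2=\theta_2^2\gg\theta_2^3=\theta_2W_0^+$. So your argument only covers the case $\theta_2W_0\lesssim W_0^+$, whereas \eqref{bound-W} has content precisely when $W_0^+$ is much smaller than $W_0$.

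The repair is to estimate $|\boldsymbol{u}_0(x)|\le\int|\boldsymbol{u}_0'|\,\mathrm{d}x\lesssim s_0W_0$ instead. This gives $\bigl|\int w_i(0,x)\,\mathrm{d}x\bigr|\lesssim s_0^2W_0^2$, which is the paper's own estimate \eqref{j2}. The negative mass of $w_i$ is then at most $s_0W_0^++Cs_0^2W_0^2$. Comparing with your lower bound $cM^2s_0^2/\theta_2$ yields
\[
M^2s_0\lesssim\theta_2W_0^++\theta_2\,s_0W_0^2 .
\]
When $W_0$ is attained at a negative value, so that $M=W_0$, the error is $C\theta_2$ times the left-hand side. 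It is absorbed for $\theta_2$ small, which holds in the paper's setting since $\theta_2\le\theta_0$.

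You should also treat the complementary case explicitly. If $W_0$ is attained at a positive value of some $w_i$, then $W_0=W_0^+$ and $s_0W_0^2=(s_0W_0)W_0^+\lesssim\theta_2W_0^+$ directly. With these two changes your Step 2 becomes a complete, self-contained proof of the estimate the paper takes from John.
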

\begin{proof}
Since \eqref{dif-theta} is obvious and \eqref{bound-W} has been proved in \cite{John-JDE-1979}, we only give the proof of \eqref{bound-theta2}. From \eqref{bound-W} , the definition of $\theta_0$, and the fact that
\begin{equation*}
(1+x^{-1})^{-1}\leq x,\quad\quad\quad \forall x>0,
\end{equation*}
the inequality \eqref{bound-theta2} can be obtained by noticing that
\begin{equation*}
\frac{\theta_2}{\theta_0}=\left(1+s_0^{-\frac12}\right)^{-1}\left(1+W_0^{-1}\right)^{-1}\leq\sqrt{s_0W_0^2}\lesssim\sqrt{\theta_2}\sqrt{W_0^+}.
\end{equation*}
\end{proof}

In the proof of Theorem \ref{thmmain}, the term $s_0W_0+s_0W_0^2$ appears frequently. Hence, we give some estimates corresponding to the term $s_0W_0+s_0W_0^2$ in the next lemma.
\begin{lemma}\label{lem:sW+sW2}
The term $s_0W_0+s_0W_0^2$ satisfies:
\begin{align}
s_0W_0+s_0W_0^2&\lesssim \theta_0, \label{sW-1} \\
s_0W_0+s_0W_0^2&\lesssim \theta_1 W_0^+,  \label{sW-2}\\
s_0W_0+s_0W_0^2&\lesssim \theta_0^2(W_0^+)^2.\label{sW-3}
\end{align}
\end{lemma}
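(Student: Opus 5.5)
We derive all three bounds from Lemma \ref{lem:prop-theta}, using the estimate \eqref{bound-W} from \cite{John-JDE-1979}, namely $s_0W_0^2\lesssim\theta_2W_0^+$. We also use the fact, implicit in the proof of Lemma \ref{lem:prop-theta}, that $W_0$ is comparable to $\sup_x|\boldsymbol{u}_0^\prime|$. This holds because the Riemann invariant derivatives $w_i(0,x)=\boldsymbol{l}_i(\boldsymbol{u}_0)\boldsymbol{u}_0^\prime$ are obtained from $\boldsymbol{u}_0^\prime$ by a uniformly invertible linear map, since $\boldsymbol{u}_0$ is small by \eqref{2}.

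The key is to handle the linear term $s_0W_0$. Comparing it with the quadratic term is not enough: when $W_0\ge1$ we have $s_0W_0\le s_0W_0^2$, but when $W_0<1$ this fails. Instead we bound $s_0W_0$ directly from \eqref{2}. Since $W_0\lesssim\sup|\boldsymbol{u}_0^\prime|\le\theta_2s_0^{-1}$, we get
\begin{equation*}
s_0W_0\lesssim\theta_2 .
\end{equation*}
Together with \eqref{bound-W} this gives
\begin{equation*}
s_0W_0+s_0W_0^2\lesssim \theta_2(1+W_0^+).
\end{equation*}
Since $W_0^+\le W_0\lesssim\theta_2 s_0^{-1}$, it is cleaner to split into the cases $W_0\le1$ and $W_0\ge1$.

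For \eqref{sW-1}: if $W_0\ge1$, then $s_0W_0+s_0W_0^2\le 2s_0W_0^2\lesssim\theta_2W_0^+\lesssim \theta_2W_0$, which does not yet close. So we instead use $s_0W_0^2\le s_0W_0\cdot W_0\lesssim\theta_2 W_0$. Combined with $W_0\lesssim\sup|\boldsymbol{u}_0^\prime|$ and the definition of $\theta_0$, this gives $\theta_2W_0\lesssim\theta_0\,s_0^{1/2}W_0$ only if $s_0^{1/2}W_0\lesssim1$; that follows from $s_0W_0^2\lesssim\theta_2W_0^+\lesssim \theta_2W_0$, i.e.\ $s_0W_0\lesssim\theta_2$, which is small. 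The cleanest route is therefore to write
\begin{equation*}
s_0W_0^2=(s_0W_0)\,W_0\lesssim \theta_2W_0 .
\end{equation*}
We then need $\theta_2(1+W_0)\lesssim\theta_0$, which is exactly the factor $(1+(\sup|\boldsymbol{u}_0^\prime|)^{-1})$ in the wrong direction. So the $W_0$ factor must instead be absorbed with the $s_0^{-1/2}$ factor: $\theta_2 W_0= \theta_2 s_0^{-1/2}\cdot s_0^{1/2}W_0$. Here $s_0^{1/2}W_0=\sqrt{s_0W_0^2}\lesssim\sqrt{\theta_2W_0^+}\lesssim1$ for small $\theta$, and $\theta_2s_0^{-1/2}\le\theta_0$ by definition. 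This gives \eqref{sW-1}.

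For \eqref{sW-2}: we have $s_0W_0^2\lesssim\theta_2W_0^+\le\theta_1W_0^+$ directly. For the linear term, $s_0W_0\lesssim\theta_2$. We also have $\theta_2=\theta_1(1+W_0^{-1})^{-1}\lesssim\theta_1 W_0$, and then we need $W_0\lesssim W_0^+$. That is the standard fact from \cite{John-JDE-1979}: for compactly supported data, $\int w_i(0,x)\,dx\approx0$, so the positive and negative parts are comparable up to higher-order terms. Finally, \eqref{sW-3} follows by inserting \eqref{bound-theta2} into \eqref{sW-2} and using $\theta_1\le\theta_0$ from \eqref{dif-theta}: $\theta_1W_0^+\lesssim\theta_1\cdot\theta_0^{-2}\cdot$\ldots. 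More simply, $s_0W_0+s_0W_0^2\lesssim\theta_2(1+W_0^+)$ combined with $\theta_2\lesssim\theta_0^2W_0^+$ gives $\theta_0^2(W_0^+ +(W_0^+)^2)$, and one then uses the lower bound $W_0^+\gtrsim\delta$ to absorb the first term.

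The main obstacle is the comparison $W_0\lesssim W_0^+$, which needs the zero-mean argument for compactly supported data. I would import it from \cite{John-JDE-1979} rather than reprove it.
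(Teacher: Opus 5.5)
Your argument for \eqref{sW-1} is essentially correct. One step needs a reason: smallness of $\theta_2$ alone does not give $\sqrt{\theta_2W_0^+}\lesssim1$, since $W_0^+$ may be large. The simplest fix is to use \eqref{2} directly: $s_0W_0^2\lesssim s_0^{-1}\theta_2^2\le\theta_0^2$, because $\theta_2\le s_0^{1/2}\theta_0$. This is what the paper does.

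The genuine gap is in \eqref{sW-2} and \eqref{sW-3}. You bound the linear term by $s_0W_0\lesssim\theta_2$ and then need $W_0\lesssim W_0^+$. That does not follow from $\int w_i(0,x)\,\mathrm{d}x\approx0$. Zero mean makes the $L^1$ masses of the positive and negative parts comparable, not their suprema, and the quantitative form of John's argument is exactly \eqref{bound-W}, which is weaker.

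With constants depending only on $\boldsymbol{A}$ and $\delta$, the comparison is false even under the hypotheses of Theorem \ref{thmmain}. Take $n=1$ and $\boldsymbol{u}_0$ supported in $[0,s_0]$, with $w_1(0,\cdot)$ a dip of depth $1$ and width $\approx h$, compensated by a plateau of height $\approx h/s_0$. Then $W_0=1$, $W_0^+\approx h/s_0$ and $\theta_2\approx s_0^2/h$. With $h=s_0^{5/4}$ one gets $\theta_0\approx s_0^{1/4}\to0$, while $W_0/W_0^+\approx s_0^{-1/4}\to\infty$.

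Worse, here $\theta_2\approx s_0^{3/4}\gg s_0\approx\theta_1W_0^+$. So once $s_0W_0$ has been replaced by $\theta_2$, no later step can recover \eqref{sW-2} or \eqref{sW-3}. The same example has $W_0^+\to0$, so the lower bound $W_0^+\gtrsim\delta$ you invoke for \eqref{sW-3} is also unavailable. The hypothesis bounds $W_0\approx\|\boldsymbol{u}_0^\prime\|_{L^\infty}$ from below, not $W_0^+$.

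The missing idea is to extract the linear term from the quadratic estimate \eqref{bound-W} rather than from \eqref{2}, and you already have both ingredients. Since $(1+x^{-1})^{-1}\le x$, we have $\theta_2=(1+\|\boldsymbol{u}_0^\prime\|_{L^\infty}^{-1})^{-1}\theta_1\lesssim W_0\theta_1$. Then \eqref{bound-W} gives $s_0W_0^2\lesssim\theta_2W_0^+\lesssim W_0\,\theta_1W_0^+$, and dividing by $W_0$ yields $s_0W_0\lesssim\theta_1W_0^+$. Together with $s_0W_0^2\lesssim\theta_2W_0^+\le\theta_1W_0^+$, this is \eqref{sW-2}, with no comparison between $W_0$ and $W_0^+$.

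Then \eqref{sW-3} follows along the lines you first started. The hypothesis $\|\boldsymbol{u}_0^\prime\|_{L^\infty}\ge\delta$ gives $\theta_1\le(1+\delta^{-1})\theta_2$. Hence $s_0W_0+s_0W_0^2\lesssim\theta_1W_0^+\lesssim\theta_2W_0^+\lesssim\theta_0^2(W_0^+)^2$ by \eqref{bound-theta2}.
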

\begin{proof}
By the definition of $\theta_1,$ we have
\begin{equation*}
\theta_1=\left(1+s_0^{-\frac12}\right)^{-1}\theta_0\leq s_0^{\frac12}\theta_0.
\end{equation*}
From \eqref{2} and \eqref{dif-theta}, one has
\begin{equation*}
s_0W_0^2\leq s_0^{-1}\theta_2^2\leq s_0^{-1}\theta_1^2\leq \theta_0,
\end{equation*}
which combining with \eqref{2} implies \eqref{sW-1}.

By the relationship between $\theta_1$ and $\theta_2,$ we have from \eqref{bound-W} that
\begin{equation*}
s_0W_0^2\lesssim \theta_2W_0^+\lesssim (1+W_0^{-1})^{-1}\theta_1W_0^+\lesssim W_0\theta_1W_0^+.
\end{equation*}
Eliminating $W_0$ yields
\begin{equation}\label{3}
s_0W_0\lesssim \theta_1 W_0^+.
\end{equation}
Combining the above estimate with \eqref{bound-W} yields \eqref{sW-2}.

Notice that from \eqref{bound-W} and \eqref{bound-theta2}, one has
\begin{equation*}
s_0W_0^2\lesssim \theta_0^2(W_0^+)^2.
\end{equation*}
In order to prove \eqref{sW-3}, it suffices to prove that
\begin{equation}\label{4}
s_0W_0\lesssim \theta_0^2(W_0^+)^2.
\end{equation}
By the definition of $\theta_1$ and $\theta_2,$ and $W_0>\delta,$
we have from \eqref{3} that
\begin{equation*}
\frac{s_0W_0}{(W_0^+)^2}\lesssim \frac{\theta_1}{W_0^+}=\frac{(1+W_0^{-1})\theta_2}{W_0^+}\lesssim \frac{\theta_2}{W_0^+}.
\end{equation*}
This and \eqref{bound-theta2} imply \eqref{4}.
\end{proof}

Then, we introduce the assumption that $|\boldsymbol{u}|\leq \delta_1,$ which will now be treated as {\it a priori assumption}: the solution $\boldsymbol{u}\in C^2([0,T]\times \mathbb{R})$ of the conservation laws with source term \eqref{CLS} satisfies
\begin{equation}\label{pri-ass}
|\boldsymbol{u}(t,x)|\leq \delta_1,\quad\quad t\in[0,T], x\in\mathbb{R}.
\end{equation}

From the smoothness of the matrix $\boldsymbol{A}(\boldsymbol{u})$ and the condition (A1),  it follows that for a small positive constant $\delta_1$, the matrix $\boldsymbol{A}=\boldsymbol{A}(\boldsymbol{u})$ is strictly hyperbolic and has distinct real eigenvalues  $\lambda_i=\lambda_i(\boldsymbol{u}),$ satisfying 
\begin{equation*}
\lambda_1(\boldsymbol{u})<\lambda_2(\boldsymbol{u})<\cdots<\lambda_n(\boldsymbol{u})
\end{equation*}
for all $\boldsymbol{u}$ such that $|\boldsymbol{u}|=\sqrt{\boldsymbol{u}\cdot \boldsymbol{u}}\leq \delta_1.$

Suppse that for the eigenvalue $\lambda_i(\boldsymbol{u})$ of the matrix $\boldsymbol{A}(\boldsymbol{u}),$ the row vector $\boldsymbol{\ell}_i=\boldsymbol{\ell}_i(\boldsymbol{u})$ is a left eigenvector and the column vector $\boldsymbol{r}_i=\boldsymbol{r}_i(\boldsymbol{u})$ is a right eigenvector, satisfying 
\begin{equation}\label{eigen-v}
\|\boldsymbol{\ell}_i\|=1,\quad\quad \boldsymbol{\ell}_i\cdot\boldsymbol{r}_j=\delta_{ij}.
\end{equation}
Then any row vector $\boldsymbol{\ell}$ and any column vector $\boldsymbol{r}$ can be decomposed as
\begin{equation*}
\boldsymbol{\ell}=\sum_{k=1}^n(\boldsymbol{\ell}\cdot\boldsymbol{r}_k)\boldsymbol{\ell}_k,\quad\quad \boldsymbol{r}=\sum_{k=1}^n(\boldsymbol{\ell}_k\cdot\boldsymbol{r})\boldsymbol{r}_k.
\end{equation*}

Denote by 
$$\boldsymbol{w}=\boldsymbol{u}_x,\quad\quad\quad w_k=\boldsymbol{\ell }_k\cdot \boldsymbol{w}.$$
Then, the first-order derivative $\boldsymbol{w}=\boldsymbol{u}_x$ can be decomposed as
\begin{equation*}
\boldsymbol{w}=\sum_{k=1}^n(\boldsymbol{\ell}_k\cdot\boldsymbol{w})\boldsymbol{r}_k=\sum_{k=1}^nw_k\boldsymbol{r}_k.
\end{equation*}

Now we define the characteristic curves of the conservation laws \eqref{CLS}. Let $T>0.$ The $i$-characteristic curve 
$$\mathcal{C}_i(z)=\{(t,x):0\leq t\leq T, x=X_i(t,z)\}$$
 starting from $(0,z)$ is the curve determined by the ordinary differential equation
\begin{equation*}
\frac{\partial }{\partial t}X_i(t,z)=\lambda_i(\boldsymbol{u}(t,X_i(t,z))),\quad\quad\quad X_i(0,z)=z.
\end{equation*}

Define
$$
c_{ijk}=c_{ijk}(\boldsymbol{u})=\boldsymbol{l}_i\boldsymbol{C}_k\boldsymbol{r}_j,$$
where
\begin{equation*}
\boldsymbol{C}_k=\boldsymbol{C}_k(\boldsymbol{u}):=\frac{\mathrm{d}}{\mathrm{d}s}\boldsymbol{A}(\boldsymbol{u}+s\boldsymbol{r}_k)\bigg|_{s=0}
\end{equation*}
represents the directional derivative of the matrix $\boldsymbol{A}$ with respect to the direction $\boldsymbol{r}_k$.  Then, it holds that
\begin{equation}\label{nabla-lambda}
\mathrm{d}\lambda_i=\sum_kc_{iik}\left(\boldsymbol{l}_k\cdot \mathrm{d}\boldsymbol{u} \right).
\end{equation}

For all $i\in \{1,2,\cdots,n\},$ the following lemma shows that the function $w_i(t,x)$ along the characteristic curve $\mathcal{C}_i(z)$ satisfies a certain ordinary differential equation along the characteristic curves.
\begin{lemma}[cf. John \cite{John-JDE-1979}, B\"{a}rlin \cite{Barlin}]\label{lem:s-eq-w}
The function $w_i$ satisfies the ordinary differential equation 
\begin{equation}\label{eq-w-s}
\frac{\mathrm{d}w_i}{\mathrm{d}t}=\sum_{j,k}\gamma_{ijk}w_jw_k+{\sum_{k}g_{ik}w_k}
\end{equation}
along the characteristic curve $\mathcal{C}_i(z)$, where
\begin{equation}\label{gamma-ijk}
\sum_{j,k}\gamma_{ijk}w_jw_k=\sum_{\substack{j,k\\j\neq i}}\frac{\lambda_i-\lambda_k}{\lambda_j-\lambda_i}c_{ijk}w_k\left[(\boldsymbol{l}_j\cdot\boldsymbol{l}_i^T)w_i-w_j\right]-\sum_{j,k}c_{ijk}w_jw_k,
\end{equation}
\begin{equation}\label{gik}
\sum_kg_{ik}w_k=\sum_k\left(\boldsymbol{l}_i\cdot\nabla_{\boldsymbol{u}}\boldsymbol{g}\cdot\boldsymbol{r}_k\right)w_k+\sum_{\substack{j,k\\j\neq i}}\frac{1}{\lambda_j-\lambda_i}c_{ijk}\left(\boldsymbol{l}_k\cdot \boldsymbol{g}\right)\left[\left(\boldsymbol{l}_j\cdot\boldsymbol{l}_i^T\right)w_i-w_j\right].
\end{equation}
Here $\gamma_{ijk}=\gamma_{ijk}(\boldsymbol{u})$ satisfy
\begin{subequations}\label{p-gamma}
\begin{align}
\gamma_{ijk}=&\gamma_{ikj}, \label{p-1} \\
\gamma_{iii}=&-c_{iii},  \label{p-2}  \\
\gamma_{ijj}=&0,\quad (j\neq i), \label{p-3}\\
2\gamma_{iij}=&\sum_{m\neq i}\frac{\lambda_i-\lambda_j}{\lambda_m-\lambda_i}c_{imj}(\boldsymbol{l}_m\cdot\boldsymbol{l}_i^T)-c_{iij}-c_{iji},\quad (j\neq i), \label{p-4}\\
2\gamma_{ijk}=&-\frac{\lambda_j-\lambda_k}{\lambda_j-\lambda_i}c_{ijk}-\frac{\lambda_k-\lambda_j}{\lambda_k-\lambda_i}c_{ikj},\quad (j\neq i, k\neq i), \label{p-5}
\end{align}
\end{subequations}
and $g_{ik}=g_{ik}(\boldsymbol{u})$ satisfy
\begin{subequations}\label{p-g}
\begin{align}
g_{ii}=&\boldsymbol{l}_i\cdot\nabla_{\boldsymbol{u}}g\cdot\boldsymbol{r}_i+\sum_{\substack{j,k\\k\neq j}}\frac{1}{\lambda_k-\lambda_i} c_{ikj}\left(\boldsymbol{l}_j\cdot \boldsymbol{g} \right)(\boldsymbol{l}_k\cdot\boldsymbol{l}_i^T),\label{pg-1}\\
g_{ik}=&\boldsymbol{l}_i\cdot\nabla_{\boldsymbol{u}}g\cdot\boldsymbol{r}_k-\sum_{j}\frac{1}{\lambda_k-\lambda_i}  c_{ikj}\left(\boldsymbol{l}_j\cdot \boldsymbol{g} \right),\quad\quad\quad (k\neq i).\label{pg-2}
\end{align}
\end{subequations}
\end{lemma}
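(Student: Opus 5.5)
The plan is a direct computation, as in John \cite{John-JDE-1979}, with the source term tracked separately.

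\textbf{Transport equation for $\boldsymbol{w}$.} Differentiating \eqref{CLS} in $x$ gives
\begin{equation*}
\boldsymbol{w}_t+\boldsymbol{A}\boldsymbol{w}_x+\sum_k w_k\boldsymbol{C}_k\boldsymbol{w}=\nabla_{\boldsymbol{u}}\boldsymbol{g}\,\boldsymbol{w},
\end{equation*}
where we used $\boldsymbol{A}(\boldsymbol{u})_x=\sum_k(\boldsymbol{l}_k\cdot\boldsymbol{u}_x)\boldsymbol{C}_k=\sum_k w_k\boldsymbol{C}_k$.

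\textbf{Evolution of $w_i$ along $\mathcal{C}_i(z)$.} Write $\frac{\mathrm d}{\mathrm dt}=\partial_t+\lambda_i\partial_x$. Then
\begin{equation*}
\frac{\mathrm dw_i}{\mathrm dt}=\boldsymbol{l}_i(\partial_t+\lambda_i\partial_x)\boldsymbol{w}+\bigl((\partial_t+\lambda_i\partial_x)\boldsymbol{l}_i\bigr)\boldsymbol{w}.
\end{equation*}
For the first term, $\boldsymbol{l}_i\boldsymbol{A}=\lambda_i\boldsymbol{l}_i$ cancels the $\boldsymbol{w}_x$ contributions, leaving
\begin{equation*}
-\sum_{j,k}c_{ijk}w_jw_k+\boldsymbol{l}_i\nabla\boldsymbol{g}\,\boldsymbol{w}.
\end{equation*}
This already produces the last sum in \eqref{gamma-ijk} and the first sum in \eqref{gik}. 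For the second term, express $\partial_t\boldsymbol{u}=-\boldsymbol{A}\boldsymbol{w}+\boldsymbol{g}=-\sum_k\lambda_kw_k\boldsymbol{r}_k+\boldsymbol{g}$, so that
\begin{equation*}
(\partial_t+\lambda_i\partial_x)\boldsymbol{u}=\sum_k(\lambda_i-\lambda_k)w_k\boldsymbol{r}_k+\sum_k(\boldsymbol{l}_k\cdot\boldsymbol{g})\boldsymbol{r}_k.
\end{equation*}
Hence $(\partial_t+\lambda_i\partial_x)\boldsymbol{l}_i=\sum_k\bigl[(\lambda_i-\lambda_k)w_k+\boldsymbol{l}_k\cdot\boldsymbol{g}\bigr]\nabla_{\boldsymbol{r}_k}\boldsymbol{l}_i$, the $\boldsymbol{g}$-part being exactly the new source contribution.

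\textbf{Key step: the directional derivative $\nabla_{\boldsymbol{r}_k}\boldsymbol{l}_i$.} Differentiate $\boldsymbol{l}_i\boldsymbol{A}=\lambda_i\boldsymbol{l}_i$ in direction $\boldsymbol{r}_k$:
\begin{equation*}
(\nabla_{\boldsymbol{r}_k}\boldsymbol{l}_i)(\boldsymbol{A}-\lambda_i)=-\boldsymbol{l}_i\boldsymbol{C}_k+c_{iik}\boldsymbol{l}_i.
\end{equation*}
Multiplying on the right by $\boldsymbol{r}_j$ with $j\neq i$ gives $(\nabla_{\boldsymbol{r}_k}\boldsymbol{l}_i)\cdot\boldsymbol{r}_j=-c_{ijk}/(\lambda_j-\lambda_i)$. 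The component along $\boldsymbol{r}_i$ is fixed by the normalization $\|\boldsymbol{l}_i\|=1$, i.e.\ $(\nabla\boldsymbol{l}_i)\boldsymbol{l}_i^T=0$. Decomposing $\nabla_{\boldsymbol{r}_k}\boldsymbol{l}_i=\sum_m(\nabla_{\boldsymbol{r}_k}\boldsymbol{l}_i\cdot\boldsymbol{r}_m)\boldsymbol{l}_m$ and imposing this orthogonality yields
\begin{equation*}
\nabla_{\boldsymbol{r}_k}\boldsymbol{l}_i\cdot\boldsymbol{w}=\sum_{j\neq i}\frac{c_{ijk}}{\lambda_j-\lambda_i}\bigl[(\boldsymbol{l}_j\cdot\boldsymbol{l}_i^T)w_i-w_j\bigr].
\end{equation*}
Plugging this in gives exactly the first sums in \eqref{gamma-ijk} and \eqref{gik}.

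\textbf{Reading off the coefficients.} Finally, symmetrize the quadratic form in $(j,k)$ to define $\gamma_{ijk}$; this gives \eqref{p-1}. The identities \eqref{p-2}--\eqref{p-5} then follow by collecting coefficients. The factor $\lambda_i-\lambda_k$ vanishes for $k=i$, which kills the $w_iw_i$ contribution of the first sum and gives $\gamma_{iii}=-c_{iii}$. For $j\neq i$, the coefficient of $w_j^2$ from the first sum is $-\frac{\lambda_i-\lambda_j}{\lambda_j-\lambda_i}c_{ijj}=c_{ijj}$, which cancels the $-c_{ijj}$ from the last sum, so $\gamma_{ijj}=0$. The coefficients $g_{ik}$ in \eqref{p-g} are obtained similarly by collecting the coefficient of $w_k$: the $w_i$-terms, which carry the factor $\boldsymbol{l}_j\cdot\boldsymbol{l}_i^T$, go into $g_{ii}$, and the $-w_j$ terms go into $g_{ij}$, after relabeling indices.

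The main obstacle is the bookkeeping of indices in the last two steps, especially the normalization $\|\boldsymbol{l}_i\|=1$ (instead of John's convention), which is what produces the factors $\boldsymbol{l}_j\cdot\boldsymbol{l}_i^T$. Given that this is the known computation of \cite{John-JDE-1979,Barlin}, I would carry out only the source-term modifications in detail.
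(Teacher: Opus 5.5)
Your computation is correct and is the standard derivation from John and B\"{a}rlin, which the paper cites without giving a proof of its own. The source term enters exactly where you track it: through $\nabla_{\boldsymbol{u}}\boldsymbol{g}\,\boldsymbol{w}$ and through the $\boldsymbol{g}$-component of $(\partial_t+\lambda_i\partial_x)\boldsymbol{u}$. One correction when you collect the $w_i$-coefficient for \eqref{pg-1}: the summation restriction comes out as $k\neq i$, not the printed $k\neq j$, which is a typo since the $k=i$ term would divide by $\lambda_i-\lambda_i$.
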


By \eqref{p-2}, the assumption (A2) and the definition of genuine nonlinearity, we have
\begin{equation*}
\gamma_{iii}(\boldsymbol{u}) = -c_{iii}(\boldsymbol{u}) = -\frac{\mathrm{d}}{\mathrm{d}s}\lambda_i(\boldsymbol{u}+s\boldsymbol{r}_i)\bigg|_{s=0} = -\nabla_{\boldsymbol{u}}\lambda_i(\boldsymbol{u})\cdot\boldsymbol{r}_i(\boldsymbol{u}) \neq 0.
\end{equation*}
Note that when $\boldsymbol{l}_i$ is replaced by $-\boldsymbol{l}_i$, $\boldsymbol{r}_i$ also changes to $-\boldsymbol{r}_i$; then the sign of the above expression changes accordingly. Hence, by choosing the sign of $\boldsymbol{l}_i$ appropriately, we can always make 
\begin{equation}\label{g>0}
\gamma_{iii}(\boldsymbol{u}) > 0 
\end{equation}
for all $|\boldsymbol{u}|\leq \delta_1$.

Let the family of $i$-characteristic curves starting from $\alpha_0$ and $\beta_0$ be denoted by
\begin{equation*}
\alpha_i(t)=X_i(t,\alpha_0),\quad\quad\quad \beta_i(t)=X_i(t,\beta_0).
\end{equation*}
Define as $R_i$ the characteristic strip formed by all the $i-$th family of characteristic curves emanating from the interval $I_0=[\alpha_0,\beta_0]$ (see Figure \ref{pic:1}), i.e., 
\begin{equation*}
R_i:=\cup_{z\in I_0}\mathcal{C}_i(z)=\{(t,x):\alpha_i(t)\leq x\leq \beta_i(t),\ 0\leq t\leq T\}.
\end{equation*}

\begin{figure}[htbp]
\centering
\begin{tikzpicture}[samples=50, global scale=0.65]
\draw [thick] (-7,0) -- (-1,0) node [below] {$\alpha_0$} -- (1,0)  node  [below] {$\beta_0$} -- (7,0) node [right] {$t=0$} ;
\draw [thick] (-7,10) -- (7,10) node [right] {$t=T$} ;
\draw [dashed] (-7,5) -- (7,5) node [right] {$t=t_0$};
\draw 
(-6.5,4.4) node (a1) {$\mathcal{C}_1(\alpha_0)$} 
(-4.2,7.3) node (b1) {$\mathcal{C}_1(\beta_0)$}
(-5,8) node (a2) {$\mathcal{C}_2(\alpha_0)$} 
(-1,9.2) node (b2) {$\mathcal{C}_2(\beta_0)$}
(1.5,9.4) node (a3) {$\mathcal{C}_3(\alpha_0)$} 
(6.2,8.1) node (b3) {$\mathcal{C}_3(\beta_0)$}
(4.5,7) node (a4) {$\mathcal{C}_4(\alpha_0)$} 
(6.1,4.5) node (b4) {$\mathcal{C}_4(\beta_0)$};
\draw [bend right,distance=20mm, thick] (-1,0) to node [pos=0.8] (11) {} (-7,6);  
\draw [bend right,distance=25mm, thick] (1,0) to node [pos=0.8] (12)  {} (-7,7);  
\draw [bend right,distance=20mm, thick] (-1,0) to node [pos=0.8] (21) {} (-5,10);  
\draw [bend right,distance=25mm, thick] (1,0) to node [pos=0.8] (22) {} (-3.5,10);  
\draw [bend left,distance=20mm, thick] (-1,0) to node [pos=0.8] (31) {} (4,10);  
\draw [bend left,distance=25mm, thick] (1,0) to node [pos=0.8] (32) {} (6.5,10);  
\draw [bend left,distance=20mm, thick] (-1,0) to node [pos=0.8] (41) {} (7,7);  
\draw [bend left,distance=25mm, thick] (1,0) to node [pos=0.8] (42)  {} (7,6);  
\draw[->] (a1)--(11);
\draw[->] (b1)--(12);
\draw[->] (a2)--(21);
\draw[->] (b2)--(22);
\draw[->] (a3)--(31);
\draw[->] (b3)--(32);
\draw[->] (a4)--(41);
\draw[->] (b4)--(42);
\node (R1) at (-3,4) {$R_1$};
\node (R2) at (-2,6.2) {$R_2$};
\node (R3) at (2,6) {$R_i$};
\node (R4) at (3.1,4) {$R_n$};
\node (a) at (0.2,7.5) {$\cdots$};
\node (b) at (5.5,7.5) {$\ddots$};
\end{tikzpicture}
\caption{The characteristic strip $R_i$}
\label{pic:1}
\end{figure}

Since $\boldsymbol{g}(\boldsymbol{0})=\boldsymbol{0}$ and $\boldsymbol{g}$ is identically zero outside a region of compact support, we thus obtain the following conclusion regarding the finite speed of propagation by applying the method of characteristics.
\begin{lemma}[cf. B\"{a}rlin \cite{Barlin}]\label{lem:finite-velocity}
Suppose that $\boldsymbol{u}\in C^1([0,T]\times\mathbb{R})$ is a solution of the conservation laws \eqref{CLS}, satisfying
\begin{equation*}
{\rm supp}\ \boldsymbol{u}(0,x)\subset [\alpha_0,\beta_0],
\end{equation*}
then for any $t\in [0,T],$ 
\begin{equation*}
{\rm supp}\ \boldsymbol{u}(t,\cdot)\subset [\alpha_0+\lambda_1(\boldsymbol{0})t,\ \beta_0+\lambda_n(\boldsymbol{0})t].
\end{equation*}
\end{lemma}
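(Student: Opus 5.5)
The idea is to show that outside the union of the strips spanned by the outermost characteristics, $\boldsymbol{u}$ stays identically zero; I would do this by a domain-of-dependence (uniqueness) argument. Fix $T>0$ and set
\[
\Lambda_-=\min_{|\boldsymbol{v}|\le M}\lambda_1(\boldsymbol{v}),\qquad \Lambda_+=\max_{|\boldsymbol{v}|\le M}\lambda_n(\boldsymbol{v}),\qquad M=\|\boldsymbol{u}\|_{L^\infty([0,T]\times\mathbb{R})}.
\]
The first step compares $\boldsymbol{u}$ with the trivial solution. Because $\boldsymbol{g}(\boldsymbol{0})=\boldsymbol{0}$ by (A3), $\boldsymbol{v}\equiv\boldsymbol{0}$ solves \eqref{CLS}. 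Both $\boldsymbol{u}$ and $\boldsymbol{0}$ vanish initially on $(\beta_0,\infty)$ and on $(-\infty,\alpha_0)$. So it suffices to prove $C^1$ uniqueness on the region to the right of the curve $\beta_n(t)$ and to the left of $\alpha_1(t)$.

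The second step is an energy estimate on a trapezoid. Take $x_1>\beta_0$ and the region $D=\{(t,x): x\ge x_1+\Lambda_+ t\}$, truncated far to the right. Write $\boldsymbol{u}-\boldsymbol{0}=\boldsymbol{u}$. Using the symmetrizer built from the eigenvectors \eqref{eigen-v}, set $S(\boldsymbol{u})=\boldsymbol{L}^T\boldsymbol{L}$, where $\boldsymbol{L}$ is the matrix with rows $\boldsymbol{\ell}_i$; then $S\boldsymbol{A}$ is symmetric. Alternatively, and more simply, work with the characteristic components $v_i=\boldsymbol{\ell}_i(\boldsymbol{u})\cdot\boldsymbol{u}$, which satisfy
\[
\partial_t v_i+\lambda_i\partial_x v_i=\sum_k b_{ik}(t,x)v_k,
\]
with bounded coefficients. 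This holds because $\boldsymbol{g}(\boldsymbol{u})=G(\boldsymbol{u})\boldsymbol{u}$ with $G$ continuous, and because $(\partial_t+\lambda_i\partial_x)\boldsymbol{\ell}_i$ is bounded by the $C^1$ norm of $\boldsymbol{u}$.

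Then define $E(t)=\sup_{i}\sup_{x\ge x_1+\Lambda_+t}|v_i(t,x)|$. For a point $(t,x)$ in $D$, trace the $i$-characteristic backward. Its speed $\lambda_i\le\Lambda_+$, so the backward curve stays in $D$ and reaches $t=0$ at some point $\ge x_1$, where $v_i=0$. Integrating along it gives $E(t)\le C\int_0^tE(s)\,ds$, and Gronwall yields $E\equiv0$. This is the main technical point: the characteristics must not exit the region, and the speed bound handles exactly that. By the symmetric argument on the left, with $\lambda_i\ge\Lambda_-$, we get $\boldsymbol{u}\equiv\boldsymbol{0}$ for $x<\alpha_0+\Lambda_-t$.

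The third step sharpens $\Lambda_\pm$ to $\lambda_1(\boldsymbol{0})$ and $\lambda_n(\boldsymbol{0})$, using a continuity (bootstrap) argument. Let $x_+(t)=\sup\operatorname{supp}\boldsymbol{u}(t,\cdot)$. On the zero region $\boldsymbol{u}=\boldsymbol{0}$, so every characteristic there has speed exactly $\lambda_i(\boldsymbol{0})\le\lambda_n(\boldsymbol{0})$. Now repeat the backward-tracing argument on the region $x\ge x_1+\lambda_n(\boldsymbol{0})t$. The set of times up to which $\boldsymbol{u}$ vanishes on this region is closed by continuity. It is also open: just beyond a good time $t^*$, we have $\boldsymbol{u}$ small near the boundary line, so there $\lambda_i\le\lambda_n(\boldsymbol{0})+\epsilon$. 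Applying the previous step locally with speed $\lambda_n(\boldsymbol{0})+\epsilon$ from time $t^*$ shows vanishing on $x\ge x_1+\lambda_n(\boldsymbol{0})t^*+(\lambda_n(\boldsymbol{0})+\epsilon)(t-t^*)$. Letting $\epsilon\to0$ and $x_1\downarrow\beta_0$, a Gronwall-type comparison on $x_+(t)$ gives $x_+(t)\le\beta_0+\lambda_n(\boldsymbol{0})t$.

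I expect this last step to be the delicate part. The cleanest route is to use the fact that the boundary of the support is itself a characteristic curve along which $\boldsymbol{u}=\boldsymbol{0}$. Indeed, $x_+(t)$ satisfies $\dot x_+\le\lambda_n(\boldsymbol{u}(t,x_+))=\lambda_n(\boldsymbol{0})$ in the sense of Dini derivatives, and integrating gives the claim. The left endpoint is handled identically with $\lambda_1(\boldsymbol{0})$.
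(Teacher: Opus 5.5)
Your argument is correct, and it is essentially the paper's own reasoning made rigorous: the paper only invokes the method of characteristics together with $\boldsymbol{g}(\boldsymbol{0})=\boldsymbol{0}$ (citing B\"{a}rlin), while your Gronwall estimate for $v_i=\boldsymbol{\ell}_i(\boldsymbol{u})\cdot\boldsymbol{u}$ along backward characteristics supplies the uniqueness step that the paper leaves implicit (this step is needed because the $v_i$-equation couples to the other components). In Step 3, however, the local step from a good time $t^*$ only gives vanishing to the right of a line of slope $\lambda_n(\boldsymbol{0})+\epsilon$, so you have not shown openness for the slope-$\lambda_n(\boldsymbol{0})$ region, and you cannot let $\epsilon\to0$ at fixed $t$ because the step length shrinks with $\epsilon$; instead run the open--closed argument on $\{x\ge x_1+(\lambda_n(\boldsymbol{0})+\epsilon)t\}$, where the compact support from Step 2 gives uniform continuity and hence a step length independent of $t^*$, and only afterwards send $\epsilon\to0$ and $x_1\downarrow\beta_0$ (your claim that the support boundary is a characteristic curve is neither justified nor needed).
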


Since the source term $\boldsymbol{g}(\boldsymbol{u})$ is a lower order term in \eqref{CLS}, it does not cause any problem in the proof of the local existence result, cf. \cite{Barlin,Hormander,Lax-1973,Rauch-2012}. Hence, we have
\begin{lemma}\label{lem:local-ex}
Let $c>0$ and $\tilde{\delta}<\delta_1.$ Let $\boldsymbol{u}_0$ be a $C^2$ function, satisfying $\|\boldsymbol{u}_0\|_{L^\infty(\mathbb{R})}\leq \tilde{\delta }$ and $\|\boldsymbol{u}_0^\prime\|_{L^\infty(\mathbb{R})}\leq c.$ Then there exists a time $T=T(\tilde{\delta},c),$ such that the conservation laws \eqref{CLS} with initial data $\boldsymbol{u}_0$ admits a unique solution $\boldsymbol{u}\in C^2([0,T]\times \mathbb{R}).$ Meanwhile, it holds that
\begin{itemize}
\item all first- and second-order partial derivatives of $\boldsymbol{u}$ are bounded;
\item the solution $\boldsymbol{u}(t,x)$ satisfies $|\boldsymbol{u}(t,x)|<\delta_1$ for all $(t,x)\in [0,T]\times\mathbb{R}$.
\end{itemize}
\end{lemma}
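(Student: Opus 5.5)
The statement is the local existence Lemma \ref{lem:local-ex}. I would prove it by the standard Picard iteration along characteristics, treating $\boldsymbol{g}(\boldsymbol{u})$ as a lower-order perturbation. The argument goes through the diagonalized system for $\boldsymbol{u}$ and its derivatives.

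\textbf{Setting up the iteration.} Fix a small $\tilde\delta<\delta_1$, so that on $\{|\boldsymbol{u}|\le\delta_1\}$ the eigenvalues, eigenvectors $\boldsymbol{\ell}_i,\boldsymbol{r}_i$, $\boldsymbol{A}$ and $\boldsymbol{g}$, together with their derivatives up to order two, are uniformly bounded. Strict hyperbolicity holds there. Set $\boldsymbol{u}^{(0)}=\boldsymbol{u}_0$. Given $\boldsymbol{u}^{(m)}$, define $\boldsymbol{u}^{(m+1)}$ as the solution of the linear system
\[
\boldsymbol{u}_t+\boldsymbol{A}(\boldsymbol{u}^{(m)})\boldsymbol{u}_x=\boldsymbol{g}(\boldsymbol{u}^{(m)}).
\]
I would solve it by projecting onto $\boldsymbol{\ell}_i(\boldsymbol{u}^{(m)})$ and integrating along the $i$-characteristics of $\lambda_i(\boldsymbol{u}^{(m)})$.

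\textbf{Uniform bounds on a short time interval.} Along $\mathcal{C}_i$, the components $\boldsymbol{\ell}_i\cdot\boldsymbol{u}$, $w_i=\boldsymbol{\ell}_i\cdot\boldsymbol{u}_x$ and the second derivatives satisfy ODEs. These are the linearized analogue of Lemma \ref{lem:s-eq-w}, with right-hand sides of the form $\sum\gamma w w+\sum g_{ik}w_k$; for second derivatives the equations are linear in the highest order with coefficients depending on first derivatives. Since (A3) gives $|\boldsymbol{g}(\boldsymbol{u})|\lesssim|\boldsymbol{u}|^2$ and $|\nabla\boldsymbol{g}|\lesssim|\boldsymbol{u}|$, Gronwall and Riccati-type comparison yield, for $t\le T(\tilde\delta,c)$ small:
\[
|\boldsymbol{u}^{(m)}|\le \tilde\delta+Ct<\delta_1,\qquad |\boldsymbol{u}^{(m)}_x|\le 2c,\qquad |\partial^2\boldsymbol{u}^{(m)}|\le 2M .
\]
Here $M$ is a bound involving $\|\boldsymbol{u}_0''\|$. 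Because $\boldsymbol{u}_0$ has compact support, this is finite. The time $T$ for the first two bounds depends only on $\tilde\delta$ and $c$, via the Riccati blow-up time $\sim 1/(Cc)$. The second-derivative equation is linear given the first-derivative bound, so it does not shorten $T$.

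\textbf{Convergence, uniqueness, and the main obstacle.} I would show that $\boldsymbol{u}^{(m)}$ is Cauchy in $C^0$, and in fact in $C^1$, by estimating differences along characteristics with the Lipschitz bounds above. Arzel\`a--Ascoli applied to the uniform $C^2$ bound then gives a $C^2$ limit solving \eqref{CLS}, with bounded first and second derivatives and $|\boldsymbol{u}|<\delta_1$. Uniqueness follows from the same energy or characteristic difference estimate applied to two solutions. Finite speed of propagation (Lemma \ref{lem:finite-velocity}) keeps everything compactly supported, so all suprema are finite. The main obstacle I expect is the step showing that the $C^1$ bound holds on a time interval independent of the second-derivative size, so that $T=T(\tilde\delta,c)$ as claimed. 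I would handle it by closing the $w_i$ Riccati estimate before touching second derivatives, and only then propagating the second derivatives linearly.
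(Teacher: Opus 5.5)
\textbf{Overall assessment.} Your scheme is the standard local theory that the paper invokes by citing B\"{a}rlin, H\"{o}rmander, Lax and Rauch; the paper gives no argument of its own. The scheme is: a linearized iteration solved along the characteristics of $\lambda_i(\boldsymbol{u}^{(m)})$, with the first-derivative bound closed on a time depending only on $\tilde\delta$ and $c$ before second derivatives are propagated linearly, followed by a contraction for differences. You are also right that the claim about bounded second derivatives needs a bound on $\boldsymbol{u}_0''$. The lemma as written does not assume one, and compact support supplies it. One cosmetic point: you close the first-order estimate in the components $\boldsymbol{\ell}_i\cdot\boldsymbol{u}_x$, so the Euclidean bound you actually get is $|\boldsymbol{u}^{(m)}_x|\le Kc$, with $K$ depending on the eigenvector basis, rather than $2c$.

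\textbf{The gap: passing to a $C^2$ limit.} A uniform bound on $\partial^2\boldsymbol{u}^{(m)}$ makes the first derivatives equicontinuous, but not the second ones. Arzel\`a--Ascoli therefore gives only convergence in $C^1$ and a limit in $C^{1,1}$, whose second derivatives are merely $L^\infty$ weak-$*$ limits. That is not the $C^2$ solution the lemma asserts. You also cannot make $\partial^2\boldsymbol{u}^{(m)}$ Cauchy in $C^0$ directly. The difference equation at that level contains $(\boldsymbol{A}(\boldsymbol{u}^{(m)})-\boldsymbol{A}(\boldsymbol{u}^{(m-1)}))\partial_x^3\boldsymbol{u}^{(m)}$, and $C^2$ data give no third-derivative bounds.

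\textbf{How to repair it.} The missing ingredient is a uniform modulus of continuity for the second derivatives, propagated from a modulus $\omega_0$ of $\boldsymbol{u}_0''$. Such a modulus exists because $\boldsymbol{u}_0\in C^2$ has compact support. The argument runs as follows.
\begin{itemize}
\item Along the $i$-characteristics of $\boldsymbol{u}^{(m)}$, the components $\boldsymbol{\ell}_i(\boldsymbol{u}^{(m)})\cdot\partial_x^2\boldsymbol{u}^{(m+1)}$ satisfy linear ODEs. Their coefficients are Lipschitz by your uniform $C^2$ bound, and their source is linear in $\partial_x^2\boldsymbol{u}^{(m)}$, with coefficients controlled by first-derivative bounds.
\item Two such characteristics at distance $h$ at time $t$ are at distance at most $e^{L(t-s)}h$ at any earlier time $s\le t$.
\item Set $\Omega_m(t,h)=\sup_{|x-y|\le h}|\partial_x^2\boldsymbol{u}^{(m)}(t,x)-\partial_x^2\boldsymbol{u}^{(m)}(t,y)|$ and $\Phi_m(t)=\Omega_m(t,e^{-Lt}H)$. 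You then get
$\Phi_{m+1}(t)\le C(\omega_0(H)+H)+C\int_0^t(\Phi_{m+1}+\Phi_m)\,\mathrm{d}s$.
\item By induction and Gronwall (using an exponential weight in $t$ so that $T$ is not shortened), this gives a modulus independent of $m$. The time modulus, and the moduli of the mixed and time derivatives, then follow from the equation.
\end{itemize}
Only with this equicontinuity does Arzel\`a--Ascoli produce a $C^2$ limit with the stated bounds. The alternative is to prove separately that a $C^{1,1}$ solution with $C^2$ data is $C^2$, which requires the same kind of estimate.
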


Since characteristic strips $R_i (i=1,2,\cdots,n)$ are independent of source term $\boldsymbol{g}(\boldsymbol{u})$, as in the proof in \cite{John-JDE-1979}, the following non-intersecting property of characteristic strips holds.

\begin{lemma}\label{lem:R-t0}
Under the {\it a priori assumption} \eqref{pri-ass}, there exists a time $t_0$, such that
$t_0\leq C_0 s_0$ and for all $t_0<t\leq T,$
\begin{equation}\label{R-t0}
[\alpha_i(t),\beta_i(t)]\cap [\alpha_j(t),\beta_j(t)]=\varnothing, \quad\quad\quad (i\neq j).
\end{equation}
Here the constant $C_0$ is only depended on $\boldsymbol{A},$ $\delta$ and $\delta_1$.
\end{lemma}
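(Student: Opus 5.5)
The separation of strips is a purely kinematic consequence of strict hyperbolicity under \eqref{pri-ass}: the $i$-strip moves with speed close to $\lambda_i(\boldsymbol{0})$ and the $j$-strip with speed close to $\lambda_j(\boldsymbol{0})$. Each strip starts with width $s_0$, so the two separate after a time of order $s_0$ divided by the speed gap. I would follow John's argument, noting that the source term only enters through the values of $\boldsymbol{u}$, which are controlled by \eqref{pri-ass}.

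\textbf{Step 1 (speed bounds).} By (A1) and continuity, shrinking $\delta_1$ if necessary, there is a constant $\sigma>0$ such that for all $|\boldsymbol{u}|\leq\delta_1$ and all $i<j$,
\begin{equation*}
\lambda_j(\boldsymbol{u})-\lambda_i(\boldsymbol{u}')\geq \sigma \quad\text{whenever } |\boldsymbol{u}|,|\boldsymbol{u}'|\leq \delta_1 .
\end{equation*}
To get this, put $\sigma=\frac12\min_i(\lambda_{i+1}(\boldsymbol{0})-\lambda_i(\boldsymbol{0}))$ and take $\delta_1$ so small that $|\lambda_k(\boldsymbol{u})-\lambda_k(\boldsymbol{0})|\leq \sigma/4$. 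Then the speed gap between families $i<j$ is at least $2\sigma-\sigma/2>\sigma$. Hence along any curves $\beta_i$ and $\alpha_j$,
\begin{equation*}
\frac{\mathrm{d}}{\mathrm{d}t}\bigl(\alpha_j(t)-\beta_i(t)\bigr)=\lambda_j(\boldsymbol{u}(t,\alpha_j(t)))-\lambda_i(\boldsymbol{u}(t,\beta_i(t)))\geq\sigma .
\end{equation*}

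\textbf{Step 2 (separation time).} Since $\alpha_j(0)-\beta_i(0)=\alpha_0-\beta_0=-s_0$, integration gives $\alpha_j(t)-\beta_i(t)\geq -s_0+\sigma t$. This is positive for $t>s_0/\sigma$.

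For $t$ beyond this time we have $\beta_i(t)<\alpha_j(t)$ for all $i<j$. Since each interval $[\alpha_k(t),\beta_k(t)]$ is nonempty (characteristics of one family do not cross while the solution is $C^1$), the intervals are pairwise disjoint, which is \eqref{R-t0}. Set $t_0=s_0/\sigma$, so $C_0=\sigma^{-1}$. This constant depends only on $\boldsymbol{A}$ and $\delta_1$, and trivially on $\delta$. If $T\leq t_0$ the statement is vacuous.

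\textbf{Main obstacle.} The only delicate point is that the bound from Step 1 requires $\boldsymbol{u}$ to stay in the $\delta_1$-ball along the boundary characteristics. This is exactly what \eqref{pri-ass} supplies. Outside the support the relevant value is simply $\boldsymbol{u}=\boldsymbol{0}$, by Lemma \ref{lem:finite-velocity}.

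Independence from $\boldsymbol{g}$ is automatic, because the characteristic ODEs involve only $\lambda_i(\boldsymbol{u})$. Order preservation $\alpha_k(t)\leq\beta_k(t)$ follows from uniqueness for the ODE $\dot X=\lambda_k(\boldsymbol{u}(t,X))$ with a $C^1$ right-hand side.
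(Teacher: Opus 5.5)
Your proposal is correct and matches the paper's approach. The paper gives no separate proof: it notes that the strips $R_i$ depend on $\boldsymbol{g}$ only through the values of $\boldsymbol{u}$ and defers to John, whose argument is the one you give. That argument uses a uniform gap $\sigma$ between the ranges of $\lambda_i$ and $\lambda_j$ on the $\delta_1$-ball, and shrinking $\delta_1$ to get this gap, beyond mere pointwise strict hyperbolicity, is the right move. Integrating from $\alpha_j(0)-\beta_i(0)=-s_0$ then gives $t_0=s_0/\sigma$. Your nonemptiness remark is not needed, since $\beta_i(t)<\alpha_j(t)$ for $i<j$ already forces the two intervals to be disjoint.
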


Then we show that any solution of the conservation laws \eqref{CLS} is bounded for short time, which means that the blowup of the solution only occurs for large time.
\begin{lemma}\label{lem:bound-w-t0}
Under the assumptions in Theorem \ref{thmmain} and the {\it a priori assumption} \eqref{pri-ass}, it holds that for all $i\in \{1,2,\cdots,n\},$ 
\begin{equation}\label{bound-w-t0}
|w_i(t,x)|\leq 2W_0, \quad\quad\quad (0\leq t\leq t_0).
\end{equation}
\end{lemma}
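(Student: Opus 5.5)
We argue by a continuity (bootstrap) argument on $[0,t_0]$, following John's treatment of the initial layer. Set
$$W(t)=\max_{i}\sup_{x}|w_i(t,x)|,$$
which is continuous in $t$ by Lemma \ref{lem:local-ex}. We have $W(0)=W_0$. Suppose, for contradiction, that there is a first time $t_1\le t_0$ with $W(t_1)=2W_0$. Then $W(t)\le 2W_0$ on $[0,t_1]$, and we derive a strictly better bound there, which gives the contradiction.

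\textbf{Step 1: bound the coefficients.} Integrate \eqref{eq-w-s} along $\mathcal{C}_i(z)$ for any $z$. By the a priori assumption \eqref{pri-ass}, the coefficients $\gamma_{ijk}(\boldsymbol{u})$ are bounded by a constant depending only on $\boldsymbol{A}$ and $\delta_1$. For the source coefficients, (A3) gives $|\nabla_{\boldsymbol{u}}\boldsymbol{g}(\boldsymbol{u})|\lesssim|\boldsymbol{u}|$ and $|\boldsymbol{g}(\boldsymbol{u})|\lesssim|\boldsymbol{u}|^2$. Hence \eqref{pg-1}–\eqref{pg-2} give $|g_{ik}|\lesssim \sup|\boldsymbol{u}|$. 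It would suffice to use $|g_{ik}|\lesssim 1$, but one can do better. By Lemma \ref{lem:finite-velocity}, $\boldsymbol{u}(t,\cdot)$ is supported in an interval of length $s_0+(\lambda_n(\boldsymbol{0})-\lambda_1(\boldsymbol{0}))t\lesssim s_0$ for $t\le t_0\le C_0s_0$. Since $|\boldsymbol{u}_x|\lesssim W$, this yields $|\boldsymbol{u}|\lesssim s_0W\le 2s_0W_0$.

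\textbf{Step 2: integrate along the characteristic.} On $[0,t_1]$ we obtain
$$|w_i(t,X_i(t,z))|\le W_0+C\int_0^t\big(W(\tau)^2+W(\tau)\big)\,d\tau\le W_0+C t_0(4W_0^2+2W_0)\le W_0+C' s_0(W_0^2+W_0).$$
Lemma \ref{lem:sW+sW2}, specifically \eqref{sW-1}, gives $s_0W_0+s_0W_0^2\lesssim\theta_0$. By \eqref{sW-2} or \eqref{sW-3}, this quantity is also bounded by a small multiple of $W_0^+\le W_0$ once $\theta_0<\theta_*$. We therefore want to choose $\theta_*$ so that $C's_0(W_0^2+W_0)\le \tfrac12 W_0$.

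\textbf{Step 3: close the argument.} With this choice, $W(t)\le\tfrac32W_0<2W_0$ on $[0,t_1]$, contradicting $W(t_1)=2W_0$. Hence \eqref{bound-w-t0} holds on all of $[0,t_0]$. The lower bound $\|\boldsymbol{u}_0'\|\ge\delta$ is what converts the absolute smallness $\lesssim\theta_0$ into smallness relative to $W_0$. This works because $w_i$ and $\boldsymbol{u}_x$ are comparable via \eqref{eigen-v}, so $W_0\gtrsim\delta$, and \eqref{sW-3} directly gives $s_0W_0(1+W_0)\lesssim\theta_0^2(W_0^+)^2\le\theta_0^2W_0\cdot W_0$.

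\textbf{Main obstacle.} The delicate point is exactly this relative smallness. A plain Gronwall/Riccati estimate yields $W(t)\le W_0/(1-CW_0t)$, which needs $CW_0t_0\lesssim s_0W_0$ small. That is provided by \eqref{sW-1}, and through $s_0\le\theta_0^2$ it involves only constants depending on $\boldsymbol{A},\delta,\delta_1$, which are allowed to enter $\theta_*$. The linear source term contributes the factor $e^{Ct_0}\le e^{CC_0\theta_0^2}\le 4/3$. So I would actually close the argument with the Riccati comparison rather than the crude bound of Step 2, obtaining
$$W\le \frac{e^{Ct_0}W_0}{1-C s_0W_0}\le 2W_0 \qquad\text{for small }\theta_0.$$
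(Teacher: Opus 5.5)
Your argument is correct and is essentially the paper's: both integrate \eqref{eq-w-s} along $\mathcal{C}_i(z)$ with the right-hand side bounded by $\Gamma W^2+GW$, use $t_0\le C_0s_0$, and close because $(\Gamma W_0+G)t_0\lesssim s_0W_0+s_0\lesssim\theta_2+\theta_0^2$ is small; the paper closes via the explicit Bernoulli comparison $\widetilde{W}(t)\le W_0/(1-Gt-\Gamma W_0t)$, which is the closing you say you would prefer, and your continuity argument is an equally valid alternative. One side remark should be corrected: the smallness relative to $W_0$ comes from $s_0W_0\le\theta_2$ (via \eqref{2}) and $s_0\le\theta_0^2$, or from \eqref{sW-2}, not from the lower bound $\delta$ or from \eqref{sW-3}, since \eqref{sW-3} only yields $C's_0(W_0^2+W_0)\lesssim\theta_0^2W_0\cdot W_0$, which need not be below $\frac12W_0$ when $W_0$ is large.
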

\begin{proof}
For all $z\in \mathbb{R},$ denote by
\begin{equation*}
\widetilde{W}(t):=\sup_{i}\ \sup_{z\in\mathbb{R}} |w_i(t,X_i(t,z))|.
\end{equation*}
It is obvious that 
\begin{equation}\label{initial-1}
0\leq |w_i(0,z)|\leq W_0.
\end{equation}
Define
\begin{equation*}
\Gamma=\sup_{|\boldsymbol{u}|\leq \delta_1}\sum_{i,j,k}|\gamma_{ijk}(\boldsymbol{u})|, \quad\quad\quad G=\sup_{|\boldsymbol{u}|\leq \delta_1}\sum_{i,k}|g_{ik}(\boldsymbol{u})|,
\end{equation*}
and notice that for any $j\neq i,$ there exists $y\in\mathbb{R},$ such that
\begin{equation*}
X_i(t,z)=X_j(t,y),
\end{equation*}
which means that
\begin{equation}\label{111}
|w_j(t,X_i(t,z))|=|w_j(t,X_j(t,y))|\leq \widetilde{W}(t).
\end{equation}
Along the characteristic curve $\mathcal{C}_i(z)$, from the differential equation \eqref{eq-w-s} and \eqref{111}, one has
\begin{equation}\label{eq1}
\left|\frac{\mathrm{d}w_i(t,X_i(t,z))}{\mathrm{d}t} \right|\leq \Gamma \widetilde{W}(t)^2+G\widetilde{W}(t).
\end{equation}
Now we solve the differential inequality \eqref{eq1} with initial data \eqref{initial-1}. Since \eqref{eq1} is equivalent to
\begin{equation*}
|w_i(t,x)|\leq W_0+\int_{0}^{t} \left(\Gamma \widetilde{W}(\tau)^2+G\widetilde{W}(\tau)\right) \mathrm{d}\tau,\quad\quad\quad (t,x)\in C_i(z),
\end{equation*}
which holds for any $z\in \mathbb{R}$ and all $i\in\{1,2,\cdots,n\}$, one obtain 
\begin{equation*}
\widetilde{W}(t)\leq W_0+\int_{0}^{t} \left(\Gamma \widetilde{W}(\tau)^2+G\widetilde{W}(\tau)\right) \mathrm{d}\tau.  
\end{equation*}
Solve this integral inequality yields
\begin{equation*}
\widetilde{W}(t)\leq \frac{W_0}{e^{-Gt}-\Gamma G^{-1}\left(1-e^{-Gt}\right)W_0}.
\end{equation*}
The fact that $e^{-x}\geq 1-x$ implies
\begin{align*}
e^{-Gt}-\Gamma G^{-1}\left(1-e^{-Gt}\right)W_0= &e^{-Gt}\left(1+\frac{\Gamma W_0}{G} \right)-\frac{\Gamma W_0}{G}\\
\geq &\left(1-Gt\right) \left(1+\frac{\Gamma W_0}{G} \right)-\frac{\Gamma W_0}{G}\\
=& 1-Gt-\Gamma W_0t.
\end{align*}
By \eqref{small-theta}, \eqref{2}, and the fact that $t_0\leq C_0s_0$ in Lemma \ref{lem:R-t0}, we have
\begin{equation*}
Gt+\Gamma W_0t\leq \frac12,\quad\quad\quad (0\leq t\leq t_0)
\end{equation*}
as long as we choose $\theta_0$ small enough. Therefore,  the lemma holds.
\end{proof}

At the end of this section, we present an estimate for the line integral, which is essentially proved using Green's formula. 
\begin{lemma}[cf. H\"{o}rmander \cite{Hormander}, B\"{a}rlin \cite{Barlin}]\label{lem:dw}
Suppose that $\boldsymbol{u}\in C^2([0,T]\times \mathbb{R})$ is a solution of \eqref{CLS} with $|\boldsymbol{u}(t,x)|\leq \delta_1.$ Let $\tau$ be a $C^1-$arc that crosses the $i$-th family of characteristic lines, and let $A_i(\tau)$ be the surface bounded by the curve segment $\tau$, the two characteristic lines of the $i$-th family passing through the endpoints of $\tau$, and the segment $\tau_0$ on the $x$-axis cut off by these two characteristic lines, as shown in Figure \ref{pic:b}. Then
\begin{equation}\label{Stokes}
\int_{\tau} |w_i(\mathrm{d}x-\lambda_i(\boldsymbol{u})\mathrm{d}t)|\leq \int_{\tau_0}|w_i|\mathrm{d}x+\int_{A_i(\tau)}\left|\sum_{j,k}\Gamma_{ijk}w_jw_k+\sum_{k}g_{ik}w_k\right|\mathrm{d}x\mathrm{d}t.
\end{equation}
Here
\begin{equation*}
\Gamma_{ijk}=\gamma_{ijk}+\delta_{ij}c_{ijk}
\end{equation*}
satisfies
\begin{equation*}
\Gamma_{ijj}=0, \quad\quad\quad \forall i, j\in \{1,2,\cdots,n\}.
\end{equation*}
\end{lemma}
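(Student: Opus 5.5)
The estimate \eqref{Stokes} comes from the divergence theorem applied to the conservation form of the equation for $w_i$. The first step is to rewrite the transport equation \eqref{eq-w-s} for $w_i$ in divergence form. Along $\mathcal{C}_i$ we have $\frac{\mathrm{d}}{\mathrm{d}t}=\partial_t+\lambda_i\partial_x$. Hence
\begin{equation*}
\partial_t w_i+\partial_x(\lambda_i w_i)=\frac{\mathrm{d}w_i}{\mathrm{d}t}+w_i\,\partial_x\lambda_i .
\end{equation*}
By \eqref{nabla-lambda} and $\boldsymbol{u}_x=\sum_k w_k\boldsymbol{r}_k$, we get $\partial_x\lambda_i=\sum_k c_{iik}w_k$. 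Substituting \eqref{eq-w-s} then gives
\begin{equation*}
\partial_t w_i+\partial_x(\lambda_i w_i)=\sum_{j,k}\gamma_{ijk}w_jw_k+\sum_k c_{iik}w_iw_k+\sum_k g_{ik}w_k=\sum_{j,k}\Gamma_{ijk}w_jw_k+\sum_kg_{ik}w_k,
\end{equation*}
where $\Gamma_{ijk}=\gamma_{ijk}+\delta_{ij}c_{ijk}$, after symmetrizing in $(j,k)$ as appropriate. The claim $\Gamma_{ijj}=0$ follows from \eqref{p-2} and \eqref{p-3}. For $j=i$ we have $\Gamma_{iii}=\gamma_{iii}+c_{iii}=0$, and for $j\neq i$ we have $\Gamma_{ijj}=\gamma_{ijj}=0$.

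The second step handles the absolute values. The $1$-form $w_i(\mathrm{d}x-\lambda_i\mathrm{d}t)$ has exterior derivative $-(\partial_tw_i+\partial_x(\lambda_iw_i))\,\mathrm{d}t\wedge\mathrm{d}x$. On the two $i$-characteristic sides of $A_i(\tau)$ we have $\mathrm{d}x=\lambda_i\mathrm{d}t$, so the form vanishes there. Green's formula therefore equates the integral over $\tau$ with the integral over $\tau_0$ plus the area integral of the source. For absolute values, I would split $\tau$ into the (countably many, up to a null set) subarcs where $w_i\neq0$ and has constant sign. Each such subarc $\tau'$ spans its own characteristic sub-region $A_i(\tau')$ with base $\tau_0'\subset\tau_0$. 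Since $\tau$ crosses the $i$-characteristics transversally, these regions are disjoint.

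To close the argument on a single subarc, the natural approach is to apply Green's formula to $\operatorname{sgn}(w_i)$ times the form. On $\tau'$, the sign is fixed, so this gives the identity exactly. An alternative is a regularization $\sqrt{w_i^2+\epsilon^2}$: one uses $\partial_t\phi+\partial_x(\lambda_i\phi)=\phi'(w_i)(\text{RHS})+(\phi-w_i\phi')\partial_x\lambda_i$. The extra term tends to $0$ as $\epsilon\to0$, which yields the inequality directly for $|w_i|$. Summing over the subarcs and using disjointness gives \eqref{Stokes}.

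The main obstacle is this handling of the absolute value. Either the sign-decomposition argument with the disjointness of the sub-regions, or the regularization limit, must be checked carefully, since $w_i$ may change sign infinitely often. I would use the regularization, since it avoids the combinatorics entirely.
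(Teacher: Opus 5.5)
Your proof is correct and follows the route the paper indicates. The paper gives no proof of its own; it cites H\"ormander and B\"arlin and remarks that the estimate is essentially Green's formula applied to the divergence form $\partial_t w_i+\partial_x(\lambda_i w_i)=\sum_{j,k}\Gamma_{ijk}w_jw_k+\sum_k g_{ik}w_k$. This is exactly what you derive: no symmetrization is needed, and $\Gamma_{ijj}=0$ follows from \eqref{p-2} and \eqref{p-3} as you say. Your regularization by $\sqrt{w_i^2+\epsilon^2}$ handles the absolute value soundly, since the extra term is bounded by $\epsilon|\partial_x\lambda_i|$ on the compact set $A_i(\tau)$. The only slip is a harmless sign: $\mathrm{d}[w_i(\mathrm{d}x-\lambda_i\mathrm{d}t)]=+(\partial_tw_i+\partial_x(\lambda_iw_i))\,\mathrm{d}t\wedge\mathrm{d}x$, which does not matter once absolute values are taken.
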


\begin{figure}[htpb]
\centering
\includegraphics[width=0.8\textwidth]{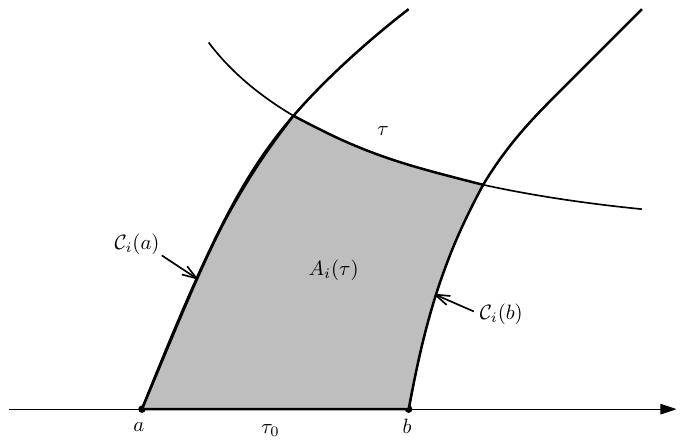}
\caption{the curve $\tau$ and the domain $A_i(\tau)$}
\label{pic:b}
\end{figure}

\section{Proof of Theorem \ref{thmmain}}

\subsection{Some bounded quantities}

To illustrate that the formation of singularities of solutions to \eqref{CLS} is due to a ``gradient catastrophe'', we introduce the following functions:
\begin{align*}
W(t)=&\sup_i\sup_{\substack{(\tau,x)\\0\leq \tau\leq t}}|w_i(\tau,x)|  ,\\
V(t)=&\sup_i\sup_{\substack{(\tau,x)\notin R_i\\0\leq \tau\leq t}}|w_i(\tau,x)| ,\\
U(t)=&\sup_{\substack{(\tau,x)\\0\leq \tau\leq t}}|\boldsymbol{u} (\tau,x)|,\\
G(t)=&\sup_{i,k}\sup_{\substack{(\tau,x)\\0\leq \tau\leq t}}|g_{ik}(\boldsymbol{u} (\tau,x))|,\\
S(t)=&\sup_i\sup_{0\leq \tau\leq t}\left(\beta_i(\tau)-\alpha_i(\tau)\right),\\
J(t)=&\sup_i\sup_{0\leq \tau\leq t}\int_{\alpha_i(\tau)}^{\beta_i(\tau)} |w_i(\tau,x)| \mathrm{d}x.
\end{align*}
Define
\begin{equation}\label{T}
T:=\max_i\frac{17}{\gamma_{iii}(0)W_0^+}.
\end{equation}
In this section, we first show in Proposition \ref{prop:main} that the functions $V(T),$ $U(T),$ $S(T),$ $J(T)$ are bounded, and then prove that $W(t)$ blows up in the interval $t\in [0,T).$

\begin{proposition}\label{prop:main}
Under the assumptions in Theorem \ref{thmmain} and the {\it a priori assumption} \eqref{pri-ass}, for any $t\in [0,T],$ there exist positive constants $C_U$, $C_G$, $C_J$, $C_S$, $C_V$ depending only on $\boldsymbol{A}$, $\boldsymbol{u}_0$ and $\delta_1$, such that
\begin{subequations}\label{bound}
\begin{align}
&U(T)\leq C_U\min\{\theta_1,\theta_0^2 W_0^+\},  \label{bound-U}\\
&G(T)\leq C_G\min\{\theta_1,\theta_0^2 W_0^+\},  \label{bound-G}\\
&J(T)\leq C_J\theta_1,\label{bound-J}\\
&S(T)\leq C_Ss_0,\label{bound-S}\\
&V(T)\leq C_V\theta_1 W_0^+.\label{bound-V}
\end{align}
\end{subequations}
\end{proposition}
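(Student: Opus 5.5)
The plan is to run F. John's bootstrap (continuity) argument \cite{John-JDE-1979}, treating the source term as a perturbation. That it is a perturbation follows from (A3): since $\boldsymbol{g}(\boldsymbol{0})=\boldsymbol{0}$ and $\nabla_{\boldsymbol{u}}\boldsymbol{g}(\boldsymbol{0})=\boldsymbol{0}$, we have $|g_{ik}(\boldsymbol{u})|\lesssim|\boldsymbol{u}|$, and hence $G(t)\lesssim U(t)$. So \eqref{bound-G} follows from \eqref{bound-U}, and the whole argument reduces to bounding $U,J,S,V$ simultaneously.

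\textbf{Bootstrap setup.} Fix large constants $C_U,C_J,C_S,C_V$. Let $t^\ast\le T$ be the supremum of times for which the estimates \eqref{bound} hold with all constants doubled. At $t=0$ they hold trivially: $|\boldsymbol{u}_0|\le\theta_2\le\theta_1$ by \eqref{2}, and $\theta_2\lesssim\theta_0^2W_0^+$ by \eqref{bound-theta2}. On $[0,t_0]$ they follow from Lemma \ref{lem:bound-w-t0} and $t_0\le C_0s_0$. Note also that $T\sim (W_0^+)^{-1}$ by \eqref{T}. The task is then to recover \eqref{bound} with the original constants on $[0,t^\ast]$ by steps (a)--(d) below.

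(a) \emph{Width $S$.} The endpoints satisfy $\frac{\mathrm d}{\mathrm dt}(\beta_i-\alpha_i)=\lambda_i(\boldsymbol{u}(\beta_i))-\lambda_i(\boldsymbol{u}(\alpha_i))$. For $t>t_0$ the strips are disjoint by Lemma \ref{lem:R-t0}. Then $\boldsymbol{u}$ restricted to $R_i$ varies only in the $\boldsymbol{r}_i$ direction up to $V$-errors, so by \eqref{nabla-lambda} this difference is bounded by $C\int_{\alpha_i}^{\beta_i}|w_i|\,\mathrm dx+CVS$. Integrating up to $T$ gives
\[
S\le C s_0+C T\,(J+VS).
\]
Since $T\,C_V\theta_1W_0^+\lesssim\theta_1$ is small, we get $S\le C_Ss_0$.

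(b) \emph{Integral $J$.} Apply Lemma \ref{lem:dw} with $\tau$ the horizontal segment of $R_i$ at time $t$. The boundary term is $\int_{\tau_0}|w_i|\le s_0W_0\lesssim\theta_1$ by \eqref{3}. Because $\Gamma_{iii}=0$, the interior integrand contains no $w_i^2$ term; it is bounded by $C(WV+V^2+GW)$ over the region $A_i$. Inside $R_i$ the factor $|w_i|$ is integrated first, contributing $\le J$, so this part is bounded by
\[
\int_0^T (CVJ+CGJ)\,\mathrm dt+CV^2ST.
\]
Using $TV\lesssim\theta_1$, $TG\lesssim\theta_0^2$ and $V^2ST\lesssim \theta_1^2W_0^+s_0$, which is $\lesssim\theta_1$ by \eqref{sW-3}, we obtain $J\le C_J\theta_1$.

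(c) \emph{Transversal derivative $V$.} Take $(t,x)\notin R_i$. The backward $i$-characteristic through $(t,x)$ lies outside $R_i$, so it meets $R_j$ ($j\ne i$) only for durations of order $S/\text{(speed gap)}\lesssim s_0$, and it starts at a point with $w_i=0$. Integrate \eqref{eq-w-s} along it. Outside all strips for $t>t_0$, the solution $\boldsymbol{u}$ vanishes (more precisely, $\boldsymbol{u}$ and $w$ are supported in the union of the strips, by Lemma \ref{lem:finite-velocity} and a characteristic argument). Since $\gamma_{ijj}=0$, crossing $R_j$ contributes only terms of the form $w_iw_j$ and $g_{ij}w_j$. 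A Gronwall argument then gives
\[
V\lesssim W_0+\int(\dots)
\]
along crossings; I would use the crossing estimate $\int|w_j|\,\mathrm dt\lesssim J$ on each transversal crossing, again via Lemma \ref{lem:dw}. The result is $V\lesssim W_0(1+J)+\dots$, which must be compared with $\theta_1W_0^+$.

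\textbf{Main obstacle.} Here lies the crux: $W_0$ is not obviously $\lesssim\theta_1W_0^+$. I would rather bound $V$ by the size reached after time $t_0$. For $t\le t_0$, Lemma \ref{lem:bound-w-t0} gives $|w|\le 2W_0$ together with the short-time factor $t_0W_0^2\lesssim s_0W_0^2\lesssim\theta_2W_0^+$ (by \eqref{bound-W}). Because $w_i=0$ on the entry boundary, the point is to exploit that $V$ counts only transversal values generated by interaction, so that $V\lesssim(s_0W_0+s_0W_0^2)\cdot$(bounded factors). This is $\lesssim\theta_1W_0^+$ by \eqref{sW-2}. Making this rigorous — showing that $w_i$ outside $R_i$ is generated solely through interaction terms over time $\lesssim s_0$ — is the step I expect to be hardest.

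(d) \emph{Size $U$.} Finally, $U$ is obtained by integrating $\boldsymbol{u}_x$ across the strips:
\[
|\boldsymbol{u}|\le\sum_i\int_{R_i}|w_i|+SV\le nJ+C_Ss_0C_V\theta_1W_0^+\lesssim\theta_1.
\]
Alternatively, using \eqref{sW-3} and \eqref{bound-theta2} gives $U\lesssim\theta_0^2W_0^+$. Then $U\le\delta_1/2$ for $\theta_0$ small, which closes both the a priori assumption \eqref{pri-ass} and the bootstrap.
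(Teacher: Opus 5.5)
Your overall architecture is the paper's: John's continuity argument on the coupled inequalities for $U,J,S,V$ (Lemmas \ref{lem:U}--\ref{lem:V}), with $G\lesssim U$ from (A3). The paper only organizes the bootstrap differently, through the weaker hypotheses (H1)--(H4). Two steps, however, do not go through as written.

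\textbf{Step (a) does not yield \eqref{bound-S}.} You drop the term $C\,TJ$ without comment. Under your hypothesis $J\lesssim\theta_1$ this term is of size $\theta_1/W_0^+$. Since $s_0W_0^+\le s_0W_0\le\theta_2\le\theta_1$ by \eqref{2}, it is never smaller than $s_0$ and is in general much larger; smallness of $TV$ only handles the $TVS$ term. The missing ingredient is an $L^1$ bound of the form $J\lesssim s_0W_0^+$, so that $TJ\lesssim s_0$. It comes from the cancellation $\int\boldsymbol{u}_0^\prime\,\mathrm{d}x=0$ behind \eqref{j2}. Because \eqref{Stokes} involves $|w_i|$, use it through the positive part:
\[
\int_{\alpha_0}^{\beta_0}|w_i(0,x)|\,\mathrm{d}x=2\int_{\alpha_0}^{\beta_0}w_i(0,x)^+\,\mathrm{d}x-\int_{\alpha_0}^{\beta_0}w_i(0,x)\,\mathrm{d}x\le 2s_0W_0^++Cs_0^2W_0^2\lesssim s_0W_0^+ .
\]
The inequality uses \eqref{j2}, and the final bound uses \eqref{bound-W}. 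The other contributions to $J$ are of the same form or absorbable:
\begin{itemize}
\item the overlap region $t\le t_0$, which you also omitted; Lemma \ref{lem:bound-w-t0} bounds it by $t_0s_0(W_0^2+W_0)$, which is $s_0W_0^+$ times a small factor by \eqref{bound-W} and \eqref{3};
\item the terms $TJ(V+G)$ and $TSV(V+G)$.
\end{itemize}
Then $S\le Cs_0+C\,TJ+C\,TSV$ closes. The paper gets its $s_0$-factor in \eqref{1J} from the same cancellation, and then uses $TJ\lesssim s_0\frac{s_0W_0+s_0W_0^2}{W_0^+}+(TV+TU)\,TV\,S$ from \eqref{j-}. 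With the absolute value that \eqref{Stokes} actually requires, the cancellation gives the weaker but sufficient bound displayed above.

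\textbf{Step (c) is routine, but your sketch rests on a false claim.} The $i$-characteristic through a point off $R_i$ starts at some $z\notin I_0$, so $w_i(0,z)=0$ and no additive $W_0$ appears. Your bound $V\lesssim W_0(1+J)+\dots$ is therefore spurious, and there is no hidden difficulty. Lemma \ref{lem:V} integrates \eqref{eq-w-s} from $0$ and splits time into three pieces:
\begin{itemize}
\item $[0,t_0]$: Lemma \ref{lem:bound-w-t0} gives a contribution $\lesssim t_0(W_0^2+W_0)\lesssim s_0W_0+s_0W_0^2\lesssim\theta_1W_0^+$ by \eqref{sW-2};
\item the set $\omega_m$ spent in $R_m$: since $\gamma_{imm}=0$, only $(V+G)|w_m|+V(V+G)$ remains, and $\int_{\omega_m}|w_m|\,\mathrm{d}\tau$ is bounded via \eqref{Stokes} (using $|\lambda_i-\lambda_m|\ge c>0$) exactly as $J$ is;
\item the remaining set $\omega^c$: it contributes $TV(V+G)$.
\end{itemize}
Everything except the first piece is absorbed by the smallness of $TV$, $TG$ and $J$.

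However, it is not true that $\boldsymbol{u}$ and $\boldsymbol{w}$ vanish off the strips for $t>t_0$. The right side of \eqref{eq-w-s} does not vanish with $w_i$: it contains the source couplings $g_{ik}w_k$ with $k\neq i$, and for $n\ge3$ the terms $\gamma_{ijk}w_jw_k$ with $i,j,k$ distinct. So $w_i$ is generated along $i$-characteristics starting outside $I_0$, which is exactly why $V\neq0$. Consequences:
\begin{itemize}
\item the $\omega^c$ term must be kept;
\item in (d) the gaps between strips contribute $(s_0+T)V$ rather than $SV$ to $|\boldsymbol{u}|$, which is still $\lesssim\theta_1$ since $TV\lesssim\theta_1$.
\end{itemize}
Separately, your remark that all bounds hold on $[0,t_0]$ by Lemma \ref{lem:bound-w-t0} is wrong for $V$, since that lemma only gives $2W_0$. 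Start the bootstrap from $V(0)=0$ instead.
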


In the following, Lemma \ref{lem:U}, Lemma \ref{lem:J}, Lemma \ref{lem:S} and Lemma \ref{lem:V} aim to find the relationship between the functions $U(T)$, $G(T)$, $J(T)$, $S(T)$ and $V(T)$.

\begin{lemma}\label{lem:U}
Under the assumptions in Theorem \ref{thmmain} and the {\it a priori assumption} \eqref{pri-ass}, it holds that
\begin{equation}\label{U}
G(T)\lesssim U(T)\lesssim s_0V(T)+TV(T)+J(T).
\end{equation}
\end{lemma}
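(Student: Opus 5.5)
The inequality has two halves, and I would prove them separately, starting with the easier left one and then using a spatial integration argument for $U(T)$.

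\emph{The bound $G(T)\lesssim U(T)$.} This should follow from (A3) alone. By (A3), $\boldsymbol{g}(\boldsymbol{0})=\boldsymbol{0}$, $\nabla_{\boldsymbol{u}}\boldsymbol{g}(\boldsymbol{0})=\boldsymbol{0}$, and $\boldsymbol{g}\in C^2$. Hence on $|\boldsymbol{u}|\leq\delta_1$ we have
\[
|\boldsymbol{g}(\boldsymbol{u})|\lesssim|\boldsymbol{u}|^2 \quad\text{and}\quad |\nabla_{\boldsymbol{u}}\boldsymbol{g}(\boldsymbol{u})|\lesssim|\boldsymbol{u}|.
\]
Look at the formulas \eqref{pg-1}--\eqref{pg-2} for $g_{ik}$. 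Each term is either $\boldsymbol{l}_i\cdot\nabla_{\boldsymbol{u}}\boldsymbol{g}\cdot\boldsymbol{r}_k$, or a bounded coefficient times $\boldsymbol{l}_j\cdot\boldsymbol{g}$. The coefficients $c_{ikj}/(\lambda_k-\lambda_i)$ and $\boldsymbol{l}_k\cdot\boldsymbol{l}_i^T$ are bounded on $|\boldsymbol{u}|\le\delta_1$ by strict hyperbolicity. Therefore $|g_{ik}(\boldsymbol{u})|\lesssim|\boldsymbol{u}|+|\boldsymbol{u}|^2\lesssim|\boldsymbol{u}|$. Taking suprema gives $G(T)\lesssim U(T)$.

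\emph{The bound on $U(T)$.} Fix $(\tau,x)$ with $\tau\le T$. By Lemma \ref{lem:finite-velocity}, $\boldsymbol{u}(\tau,\cdot)$ vanishes to the left of $\alpha_0+\lambda_1(\boldsymbol{0})\tau$. So I would integrate from $-\infty$:
\[
\boldsymbol{u}(\tau,x)=\int_{-\infty}^x\boldsymbol{w}\,\mathrm{d}y=\sum_i\int_{-\infty}^x w_i\boldsymbol{r}_i\,\mathrm{d}y,
\]
which gives $|\boldsymbol{u}(\tau,x)|\lesssim\sum_i\int_{\mathrm{supp}\,\boldsymbol{u}(\tau,\cdot)}|w_i(\tau,y)|\,\mathrm{d}y$. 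For each $i$, split the range of integration into the part inside $[\alpha_i(\tau),\beta_i(\tau)]$ and the part outside.
\begin{itemize}
\item The inside part is bounded by $J(T)$ by definition.
\item On the outside part, $(\tau,y)\notin R_i$, so $|w_i|\le V(T)$ there. That part has length at most $|\mathrm{supp}\,\boldsymbol{u}(\tau,\cdot)|\le s_0+(\lambda_n(\boldsymbol{0})-\lambda_1(\boldsymbol{0}))\tau\lesssim s_0+T$, again by Lemma \ref{lem:finite-velocity}.
\end{itemize}
Together these give $|\boldsymbol{u}(\tau,x)|\lesssim J(T)+(s_0+T)V(T)$. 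Taking the supremum over $(\tau,x)$ yields the claimed bound on $U(T)$.

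The main delicate point is making sure the interval length is controlled by $s_0+T$ uniformly. Lemma \ref{lem:finite-velocity} handles this, with constants depending only on $\boldsymbol{A}$. A second point is that the eigenvectors $\boldsymbol{r}_i$ must be uniformly bounded on $|\boldsymbol{u}|\le\delta_1$. This follows from \eqref{eigen-v} and strict hyperbolicity under the a priori assumption \eqref{pri-ass}. No smallness of $\theta_0$ is needed for this lemma.
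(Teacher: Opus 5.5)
Your proposal is correct and follows essentially the same argument as the paper. The paper likewise uses finite propagation speed to confine $\boldsymbol{u}(\tau,\cdot)$ to an interval of length $\lesssim s_0+T$ and integrates $\boldsymbol{w}=\sum_i w_i\boldsymbol{r}_i$ over it, bounding the part in $[\alpha_i,\beta_i]$ by $J(T)$ and the rest by $(s_0+T)V(T)$, and it obtains $|g_{ik}(\boldsymbol{u})|\lesssim|\boldsymbol{g}|+|\nabla_{\boldsymbol{u}}\boldsymbol{g}|\lesssim|\boldsymbol{u}|$ from (A3). Your per-$i$ split into inside versus outside $[\alpha_i(\tau),\beta_i(\tau)]$ is slightly cleaner than the paper's partition into $\mathcal{I}(t)$ and its complement, because it explicitly covers the other strips $[\alpha_j,\beta_j]$, $j\neq i$, where $|w_i|\le V(T)$ also holds.
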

\begin{proof}
Since the solutions of hyperbolic conservation laws have finite propagation speed as showed in Lemma \ref{lem:finite-velocity}, for any $t\in [0,T],$ $\boldsymbol{u}(t,x)$ has compact support in $[\alpha_1(t),\beta_n(t)]$. Hence,
\begin{equation}\label{u1}
|\boldsymbol{u} (t,x)|\lesssim \sum_{i}\int_{\alpha_1(t)}^{\beta_n(t)} |w_i(t,x)| \mathrm{d}x.
\end{equation}
From Lemma \ref{lem:finite-velocity}, we have
\begin{equation}\label{u2}
\beta_n(t)-\alpha_1(t)= (\beta_0+\lambda_n(\boldsymbol{0})t)-(\alpha_0+\lambda_1(\boldsymbol{0})t)\leq s_0+(\lambda_n(\boldsymbol{0})-\lambda_1(\boldsymbol{0}))T.
\end{equation}
Now the interval $[\alpha_1(t),\beta_n(t)]$ can be partitioned into two disjoint subsets, $\mathcal{I}(t)$ and $[\alpha_1(t),\beta_n(t)]\backslash \mathcal{I}(t)$, where
\begin{equation*}
\mathcal{I}(t)=\cup_{i=1}^{n}[\alpha_i(t),\beta_i(t)].
\end{equation*}
Through such a decomposition of integration domain, \eqref{u1} can be estimated as
\begin{align*}
|\boldsymbol{u}(t,x)|\lesssim& \sum_{i}\left(\int_{\alpha_i(t)}^{\beta_i(t)} |w_i(t,x)| \mathrm{d}x+\int_{[\alpha_1(t),\beta_n(t)] \backslash\mathcal{I}(t)} |w_i(t,x)| \mathrm{d}x\right)\\
\lesssim & J(T)+(\beta_n(t)-\alpha_1(t))V(T)\\
\lesssim & J(T)+(s_0+T)V(T)
\end{align*}
by \eqref{u2} and the definition of $J(t)$ and $V(t)$. 

Utilizing the definition of $g_{ij}(\boldsymbol{u})$ in Lemma \ref{lem:s-eq-w}, we obtain for any $i,j,$ 
\begin{equation*}
|g_{ij}(\boldsymbol{u})|\lesssim |g(\boldsymbol{u})|+|\nabla_{\boldsymbol{u}}g(\boldsymbol{u})|\lesssim |\boldsymbol{u}|\lesssim U(T).
\end{equation*}
Hence, it holds that
\begin{equation}\label{G}
G(T)\lesssim U(T).
\end{equation}
This finishes the proof of the lemma.
\end{proof}

\begin{lemma}\label{lem:J}
Under the assumptions in Theorem \ref{thmmain} and the {\it a priori assumption} \eqref{pri-ass}, it holds that
\begin{equation}\label{J}
 J(T)\lesssim s_0^2W_0+s_0^2W_0^2 +TJ(T)(V(T)+G(T))+TS(T)V(T)(V(T)+G(T)).
\end{equation}
\end{lemma}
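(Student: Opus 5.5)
Fix $i$ and $t\in[0,T]$. The plan is to apply the Green-formula estimate of Lemma \ref{lem:dw} with $\tau$ the horizontal segment $\{t\}\times[\alpha_i(t),\beta_i(t)]$. Along this segment $\mathrm{d}t=0$, so the left-hand side of \eqref{Stokes} is exactly $\int_{\alpha_i(t)}^{\beta_i(t)}|w_i(t,x)|\,\mathrm{d}x$. The two $i$-characteristics through the endpoints are $\alpha_i$ and $\beta_i$, so $\tau_0=I_0$ and $A_i(\tau)$ is the part of $R_i$ with $0\le\tau'\le t$. This gives
\begin{equation*}
\int_{\alpha_i(t)}^{\beta_i(t)}|w_i|\,\mathrm{d}x\le \int_{\alpha_0}^{\beta_0}|w_i(0,x)|\,\mathrm{d}x+\int_{R_i\cap\{\tau'\le t\}}\Big(\sum_{j,k}|\Gamma_{ijk}w_jw_k|+\sum_k|g_{ik}w_k|\Big)\mathrm{d}x\,\mathrm{d}\tau'.
\end{equation*}
The first term is at most $s_0W_0$.

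The integrand over $R_i$ is split into pieces. Since $\Gamma_{iii}=0$ (as $\Gamma_{ijj}=0$), every quadratic term contains at least one factor $w_j$ with $j\ne i$.

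For the terms $\Gamma_{iij}w_iw_j$ with $j\ne i$, the factor $|w_j|$ is controlled differently inside and outside $R_j$. When $\tau'>t_0$, Lemma \ref{lem:R-t0} says $R_i$ and $R_j$ are disjoint, so $|w_j|\le V(T)$. The contribution from $\tau'\in(t_0,t]$ is then bounded by $\int_{t_0}^t V\int|w_i|\,\mathrm{d}x\,\mathrm{d}\tau'\le TJ(T)V(T)$.

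The terms $\Gamma_{ijk}w_jw_k$ with $j,k\ne i$ are handled the same way for $\tau'>t_0$. The region $R_i$ at time $\tau'$ lies outside both $R_j$ and $R_k$, so each factor is at most $V$. Since the width of $R_i$ is at most $S(T)$, these terms give at most $TS(T)V(T)^2$.

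The source terms for $\tau'>t_0$ split similarly. The term $g_{ii}w_i$ gives at most $TG(T)J(T)$. The terms $g_{ik}w_k$ with $k\ne i$ give at most $TS(T)G(T)V(T)$.

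The initial layer $0\le\tau'\le t_0$ is handled with Lemma \ref{lem:bound-w-t0}, which gives $|w_j|\le 2W_0$ there. The width of $R_i$ at those times is at most $S(T)\lesssim s_0$ (one could also use $s_0+Ct_0\lesssim s_0$ directly). Since $t_0\le C_0s_0$, the area of this piece is $\lesssim s_0^2$. Its contribution is therefore $\lesssim s_0^2(W_0^2+GW_0)\lesssim s_0^2W_0+s_0^2W_0^2$, where $G$ is bounded by a constant under \eqref{pri-ass}.

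The remaining issue is the boundary term $s_0W_0$. Taken directly it is not of the order $s_0^2W_0$ claimed in \eqref{J}. The fix is to use compact support: since $\boldsymbol{u}_0$ vanishes at $\alpha_0$, we have $|w_i(0,x)|\lesssim|\boldsymbol{u}_0'(x)|\le s_0\sup|\boldsymbol{u}_0''|$ on $I_0$. This gives $\int_{I_0}|w_i(0,\cdot)|\lesssim s_0^2\sup|\boldsymbol{u}_0''|$.

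I expect this step to be the main obstacle. The quantity $s_0^2\sup|\boldsymbol{u}_0''|$ is $\theta_2$, and relating it to the stated form $s_0^2W_0+s_0^2W_0^2$ is delicate. If the exact bound cannot be matched, I would fall back on $\theta_2$, which is all that \eqref{bound-J} ultimately needs via $\theta_2\le\theta_1$. I would also double-check the splitting of the initial layer.

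Finally, take the supremum over $i$ and over times up to $T$, and sum all contributions. This yields \eqref{J}.
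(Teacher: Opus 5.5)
Your treatment of the area integral is the paper's own argument. You apply Lemma \ref{lem:dw} on $\{t\}\times[\alpha_i(t),\beta_i(t)]$. You bound the layer $[0,t_0]$ by Lemma \ref{lem:bound-w-t0} with area $\lesssim s_0^2$; use your alternative $s_0+Ct_0\lesssim s_0$ for the width there, since invoking $S(T)\lesssim s_0$ would be circular. For $\tau>t_0$ you use $\Gamma_{iii}=0$ and the disjointness of the strips to get $TJV+TSV^2+TGJ+TGSV$.

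The gap is the boundary term $\int_{\alpha_0}^{\beta_0}|w_i(0,x)|\,\mathrm{d}x$. You never bound it by $s_0^2W_0+s_0^2W_0^2$, yet your last sentence asserts \eqref{J}. Your proposed fix does not help. Since $\|\boldsymbol{l}_i\|=1$ and $\boldsymbol{u}_0'(\alpha_0)=0$, we have $s_0W_0\le s_0\sup|\boldsymbol{u}_0'|\le s_0^2\sup|\boldsymbol{u}_0''|=\theta_2$. So the $\theta_2$ bound is weaker than the trivial bound $s_0W_0$, not stronger. The fallback also proves a different inequality, and it cannot simply be substituted into the proof of Proposition \ref{prop:main}. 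That proof uses the prefactor $s_0$ in \eqref{1J}, e.g. to get $S\lesssim s_0$ from \eqref{1S}. With $J\lesssim\theta_2$ you only get $TJ\lesssim\theta_2/W_0^+$, which is at least $s_0$.

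The paper reaches the stated form through \eqref{j2}, by cancellation. By compact support, $\int_{I_0}\boldsymbol{l}_i(\boldsymbol{0})\cdot\boldsymbol{u}_0'\,\mathrm{d}x=0$, and $|\boldsymbol{l}_i(\boldsymbol{u}_0)-\boldsymbol{l}_i(\boldsymbol{0})|\lesssim|\boldsymbol{u}_0|\lesssim s_0W_0$, which gives $s_0^2W_0^2$. However, this controls only $\bigl|\int_{I_0}w_i(0,x)\,\mathrm{d}x\bigr|$, whereas \eqref{Stokes} produces $\int_{I_0}|w_i(0,x)|\,\mathrm{d}x$. The absolute value destroys the cancellation, so this device does not close your gap either.

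In fact the initial term is genuinely not of that size. Take $\boldsymbol{u}_0=h\,\phi((x-\alpha_0)/s_0)\,\boldsymbol{r}_i(\boldsymbol{0})$, with $\phi\ge 0$ a single bump of height $1$ and $s_0=h$, so that $W_0\sim 1$ and $\theta_0\sim h^{1/2}$. Then $\int_{I_0}|w_i(0,x)|\,\mathrm{d}x=2h(1+O(h))$, while $s_0^2W_0+s_0^2W_0^2=O(h^2)$. Since $J(T)$ dominates this integral, no argument can bound the initial contribution by the stated quantity.

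What your argument actually proves is \eqref{J} with an additional $s_0W_0$ on the right-hand side; the paper's argument proves the same once the absolute value is respected. Whether the bootstrap in Section 3 still closes with that weaker estimate must be rechecked, not assumed.
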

\begin{proof}
Applying Lemma \ref{lem:dw} to the line segment $[\alpha_i(t),\beta_i(t)]$
across the characteristic strip $R_i$,  we therefore have
\begin{multline}\label{j1}
\int_{\alpha_i(t)}^{\beta_i(t)} |w_i(t,x)| \mathrm{d}x\\
 \leq \int_{\alpha_0}^{\beta_0} |w_i(0,x)| \mathrm{d}x +\int_{0}^{t}\int_{\alpha_i(\tau)}^{\beta_i(\tau)} \left|\sum_{j,k}\Gamma_{ijk}w_jw_k+\sum_{k}g_{ik}w_k\right| \mathrm{d}x\mathrm{d}\tau.
\end{multline}
Notice that
\begin{equation*}
|\boldsymbol{u}_0^\prime|\lesssim W_0,\quad\quad\quad |\boldsymbol{u}_0|\lesssim s_0W_0,
\end{equation*}
\begin{equation*}
|\boldsymbol{l}_i(\boldsymbol{u}_0)-\boldsymbol{l}_i(\boldsymbol{0})|\lesssim s_0W_0.
\end{equation*}
Hence, the first term on the right-hand side of \eqref{j1} satisfies
\begin{equation}\label{j2}
\left|\int_{\alpha_0}^{\beta_0} w_i(0,x) \mathrm{d}x\right|=\left|\int_{\alpha_0}^{\beta_0} \left(\boldsymbol{l}_i(\boldsymbol{u}_0)-\boldsymbol{l}_i(\boldsymbol{0}) \right)\cdot \boldsymbol{u}_0^\prime \mathrm{d}x\right|\lesssim s_0^2W_0^2.
\end{equation}
 
For $0\leq t\leq t_0,$ the fact that $t_0\lesssim s_0,$ \eqref{u2} and the Lemma \ref{lem:bound-w-t0} imply
\begin{equation}\label{j3}
\int_{\alpha_i(t)}^{\beta_i(t)} |w_i(t,x)| \mathrm{d}x\lesssim { s_0^2W_0+s_0^2W_0^2}.
\end{equation}
For $t_0\leq t\leq T,$ from Lemma \ref{lem:R-t0} we know that for all $j\neq i,$ 
\begin{equation*}
[\alpha_i(t),\beta_i(t)]\cap [\alpha_j(t),\beta_j(t)]= \varnothing.
\end{equation*}
Hence, for any $t\in [t_0,T],$ we have 
\begin{equation}\label{j4}
|w_j(t,x)|\leq V(t),\quad\quad\quad \forall x\in [\alpha_i(t),\beta_i(t)].
\end{equation}
Then, by \eqref{j3} and the fact that $\Gamma_{ijj}=0$ in Lemma \ref{lem:dw}, the second term on the right-hand side of inequality \eqref{j1} can be separated as 
\begin{align*}
&\int_{0}^{t}\int_{\alpha_i(\tau)}^{\beta_i(\tau)} \left|\sum_{j,k}\Gamma_{ijk}w_jw_k+\sum_{k}g_{ik}w_k\right| \mathrm{d}x\mathrm{d}\tau\\
=&\left(\int_{0}^{t_0}\int_{\alpha_i(\tau)}^{\beta_i(\tau)}+\int_{t_0}^{t}\int_{\alpha_i(\tau)}^{\beta_i(\tau)}\right) \left|\sum_{j,k}\Gamma_{ijk}w_jw_k+\sum_{k}g_{ik}w_k\right| \mathrm{d}x\mathrm{d}\tau\\
\lesssim & s_0^2W_0+s_0^2W_0^2+
\sum_{j\neq i}\int_{t_0}^{t}\int_{\alpha_i(\tau)}^{\beta_i(\tau)}|w_iw_j|\mathrm{d}x\mathrm{d}\tau+\sum_{j\neq i}\int_{t_0}^{t}\int_{\alpha_i(\tau)}^{\beta_i(\tau)}w_j^2\mathrm{d}x\mathrm{d}\tau\\
&+G(T)\int_{t_0}^{t}\int_{\alpha_i(\tau)}^{\beta_i(\tau)}|w_i|\mathrm{d}x\mathrm{d}\tau+G(T)\sum_{j\neq i}\int_{t_0}^{t}\int_{\alpha_i(\tau)}^{\beta_i(\tau)}|w_j|\mathrm{d}x\mathrm{d}\tau\\
\lesssim & s_0^2W_0+s_0^2W_0^2+TJ(T)V(T)+TS(T)V(T)^2+TG(T)J(T)+TG(T)S(T)V(T).
\end{align*}
Substituting the above estimate into \eqref{j1} yields \eqref{J}.
\end{proof}

\begin{lemma}\label{lem:S}
Under the assumptions in Theorem \ref{thmmain} and the {\it a priori assumption} \eqref{pri-ass}, it holds that
\begin{equation}\label{S}
S(T)\lesssim s_0+TS(T)V(T)+TJ(T).
\end{equation}
\end{lemma}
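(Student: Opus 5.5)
We have to bound the width $\beta_i(t)-\alpha_i(t)$ of the strip $R_i$. Differentiating in $t$ and using the characteristic equation gives
\begin{equation*}
\frac{\mathrm{d}}{\mathrm{d}t}\left(\beta_i(t)-\alpha_i(t)\right)=\lambda_i(\boldsymbol{u}(t,\beta_i(t)))-\lambda_i(\boldsymbol{u}(t,\alpha_i(t))).
\end{equation*}
We write this difference as a line integral over the segment $[\alpha_i(t),\beta_i(t)]$ at time $t$. By \eqref{nabla-lambda},
\begin{equation*}
\lambda_i(\boldsymbol{u}(t,\beta_i(t)))-\lambda_i(\boldsymbol{u}(t,\alpha_i(t)))=\int_{\alpha_i(t)}^{\beta_i(t)}\sum_k c_{iik}(\boldsymbol{u})\,w_k(t,x)\,\mathrm{d}x .
\end{equation*}
Since $|c_{iik}|$ is bounded for $|\boldsymbol{u}|\leq\delta_1$, we get
\begin{equation*}
\left|\frac{\mathrm{d}}{\mathrm{d}t}(\beta_i-\alpha_i)\right|\lesssim \int_{\alpha_i(t)}^{\beta_i(t)}|w_i|\,\mathrm{d}x+\sum_{k\neq i}\int_{\alpha_i(t)}^{\beta_i(t)}|w_k|\,\mathrm{d}x .
\end{equation*}
The first term is at most $J(T)$ by definition.

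For the terms with $k\neq i$ we split time at $t_0$, as in Lemma \ref{lem:J}. For $t\geq t_0$, Lemma \ref{lem:R-t0} says the strip $[\alpha_i(t),\beta_i(t)]$ is disjoint from $[\alpha_k(t),\beta_k(t)]$. Hence every $x$ in it lies outside $R_k$, so $|w_k(t,x)|\leq V(T)$ and the cross terms are bounded by $(\beta_i-\alpha_i)V(T)\leq S(T)V(T)$. For $0\leq t\leq t_0$ we cannot use $V$. Instead we use Lemma \ref{lem:bound-w-t0}, which gives $|w_k|\leq 2W_0$, so the integrand is bounded by $S(T)W_0$ on that interval.

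Integrating from $0$ to $t\leq T$ and using $\beta_i(0)-\alpha_i(0)=s_0$ gives
\begin{equation*}
\beta_i(t)-\alpha_i(t)\leq s_0+C\left(t_0 S(T)W_0+T S(T)V(T)+T J(T)\right).
\end{equation*}
Taking the supremum over $t$ and $i$ reduces everything to absorbing the extra term $t_0S(T)W_0$.

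The main obstacle is exactly this short-time term. I would handle it with $t_0\leq C_0 s_0$ from Lemma \ref{lem:R-t0} together with \eqref{sW-1}, which give $t_0W_0\lesssim s_0W_0\lesssim\theta_0$. For $\theta_0$ small enough, $C t_0 W_0 S(T)\leq \tfrac12 S(T)$, and this can be moved to the left-hand side; since $S(T)\leq s_0+(\lambda_n(\boldsymbol{0})-\lambda_1(\boldsymbol{0}))T<\infty$ by Lemma \ref{lem:finite-velocity}, the absorption is legitimate. This yields \eqref{S}.

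An alternative for $t\leq t_0$ is a crude direct bound: $\beta_i-\alpha_i\leq s_0+Ct_0\lesssim s_0$, using that $\lambda_i$ is bounded. This avoids the absorption, but the contribution of $[0,t_0]$ must then be restarted from time $t_0$, which is less clean. I would use the absorption argument.
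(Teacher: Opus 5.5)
Your proposal is correct and follows essentially the paper's argument. On $[t_0,T]$ you use the integral representation from \eqref{nabla-lambda} together with \eqref{j4}, and on $[0,t_0]$ you use $|w_k|\le 2W_0$ from Lemma \ref{lem:bound-w-t0} plus $t_0\lesssim s_0$ and the smallness of $s_0W_0$ to absorb that contribution. The only difference is cosmetic: the paper first proves $S(t)\lesssim s_0$ on $[0,t_0]$ via a Lipschitz bound on $\lambda_i(\boldsymbol{u})$ and then integrates from $t_0$, whereas you integrate over all of $[0,T]$ and absorb once at the end.
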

\begin{proof}
For $0\leq t\leq t_0$, it is obvious that
\begin{align*}
\left|\frac{\mathrm{d}(\beta_i(t)-\alpha_i(t))}{\mathrm{d}t} \right|\leq&|\lambda_i(\boldsymbol{u}(t,\beta_i(t)))-\lambda_i(\boldsymbol{u}(t,\alpha_i(t)))| \\
\lesssim & |\boldsymbol{u}(t,\beta_i(t))-\boldsymbol{u}(t,\alpha_i(t))|\\
\lesssim & W_0(\beta_i(t)-\alpha_i(t)).
\end{align*}
Since $t_0\lesssim s_0$, for $0\leq t\leq t_0$,
\begin{equation} S(t)\lesssim s_0+t_0W_0S(t), \end{equation}
which, combined with the smallness of $s_0W_0$, implies
\begin{equation*}
S(t)\lesssim s_0,\quad\quad\quad \forall t\in[0,t_0].
\end{equation*}

For $t_0< t\leq T$, by \eqref{nabla-lambda} and the definition of $\alpha_i(t)$ and $\beta_i(t),$ it holds that
\begin{equation*}
\frac{\mathrm{d}(\beta_i(t)-\alpha_i(t))}{\mathrm{d}t}=\int_{\alpha_i(t)}^{\beta_i(t)} \frac{\partial}{\partial x} \lambda_i(\boldsymbol{u}(t,x)) \mathrm{d}x =\int_{\alpha_i(t)}^{\beta_i(t)} \sum_{k}c_{iik}(\boldsymbol{u})w_k(\boldsymbol{u}) \mathrm{d}x.
\end{equation*}
Notice that the integral interval is $[\alpha_i(t), \beta_i(t)].$ Then we have
\begin{equation*}
\left|\frac{\mathrm{d}(\beta_i-\alpha_i)}{\mathrm{d}t} \right|\lesssim S(t)V(t)+J(t)
\end{equation*}
by using \eqref{j4}. The lemma is proved by integrating the above inequality over $[t_0,T]$.
\end{proof}

\begin{lemma}\label{lem:V}
Under the assumptions in Theorem \ref{thmmain} and the {\it a priori assumption} \eqref{pri-ass}, it holds that
\begin{multline}\label{V}
V(T)\lesssim s_0W_0+s_0W_0^2
+TV(T)(V(T)+G(T))\\+TJ(T)(V(T)+G(T))^2+TS(T)V(T)(V(T)+G(T))^2.
\end{multline}
\end{lemma}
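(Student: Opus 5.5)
We estimate $w_i$ at a point $(t,x)\notin R_i$ by following the $i$-characteristic $\mathcal{C}_i(z)$ through $(t,x)$ backward to time $0$, as in John's argument. Since $(t,x)\notin R_i$, the starting point satisfies $z\notin(\alpha_0,\beta_0)$, so $w_i(0,z)=0$. The claimed bound then comes from integrating \eqref{eq-w-s} along this curve and splitting the time interval into $[0,t_0]$ and $[t_0,t]$.

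\emph{Initial layer $[0,t_0]$.} Here Lemma \ref{lem:bound-w-t0} gives $|w_j|\le 2W_0$, and $t_0\lesssim s_0$ by Lemma \ref{lem:R-t0}. Integrating the right-hand side of \eqref{eq-w-s}, the contribution is at most $\lesssim t_0(W_0^2+G(T)W_0)\lesssim s_0W_0^2+s_0W_0$, using that $G\lesssim |\boldsymbol{u}|$ is bounded. Hence $|w_i|\lesssim s_0W_0+s_0W_0^2$ at time $t_0$ on every $i$-characteristic not starting in $I_0$.

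\emph{Later times $[t_0,t]$.} The characteristic stays outside $R_i$, so $|w_i|\le V$ along it. Decompose the right-hand side of \eqref{eq-w-s}. At points outside every strip $R_j$, all $|w_j|\le V$, so the integrand is $\lesssim V(V+G)$. Integrating in time gives the term $TV(V+G)$.

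\emph{Crossing another strip.} Where $\mathcal{C}_i(z)$ crosses a strip $R_j$ with $j\neq i$, $w_j$ may be large; at times $\ge t_0$ at most one strip is crossed at a given time, by Lemma \ref{lem:R-t0}. The terms with $j\ne i$ and $k\ne j$ are bounded by $|w_j|(V+G)$, and the term $\gamma_{ijj}w_j^2$ vanishes by \eqref{p-3}. So only the linear contribution $\int |w_j|(V+G)\,\mathrm{d}\tau$ along the crossing must be controlled.

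This is the main obstacle: converting a time integral of $|w_j|$ across $R_j$ into the spatial quantity $J$. I would use Lemma \ref{lem:dw} with $\tau$ the piece of $\mathcal{C}_i(z)$ inside $R_j$. On $\tau$ we have $\mathrm{d}x=\lambda_i\,\mathrm{d}t$, so
$$\int_\tau |w_j|\,|\lambda_i-\lambda_j|\,\mathrm{d}t\le \int_{\tau_0}|w_j|\,\mathrm{d}x+\iint_{A_j(\tau)}\Big|\sum\Gamma_{jkl}w_kw_l+\sum g_{jk}w_k\Big|\,\mathrm{d}x\,\mathrm{d}t.$$
Strict hyperbolicity gives $|\lambda_i-\lambda_j|\ge c>0$. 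The first term is taken at a time slice; I would place $\tau_0$ at the slice $t=t_0$ rather than the $x$-axis, giving a bound $\lesssim J$. The area integral is estimated exactly as in Lemma \ref{lem:J}: since $\Gamma_{jll}=0$, it is $\lesssim TJ(V+G)+TSV(V+G)$, plus the $t\le t_0$ contributions $s_0^2W_0+s_0^2W_0^2$. Multiplying this by the factor $(V+G)$ coming from the integrand gives the terms $TJ(V+G)^2+TSV(V+G)^2$.

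There is one delicate point. The bare $J(V+G)$ produced by the first term must be absorbed, and it is not of the form displayed in \eqref{V}. I would either bound $J\lesssim$ the right side of \eqref{J} and substitute it, or note that $J(V+G)$ is dominated since $J\lesssim s_0^2W_0+s_0^2W_0^2+\dots$. This is the step to check most carefully.

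Finally, take the supremum over $i$ and over points outside $R_i$ with time at most $T$ to obtain \eqref{V}.
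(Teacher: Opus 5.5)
Your proposal is correct and follows the paper's proof essentially step for step. You use the same split of the $i$-characteristic into the initial layer $[0,t_0]$, the times outside all strips, and the times spent crossing a single strip $R_m$ (where $\gamma_{imm}=0$ leaves only terms linear in $w_m$), and you apply Lemma \ref{lem:dw} to the crossing arc to turn $\int|w_m|\,\mathrm{d}\tau$ into an area integral over $R_m$. Basing Green's formula at $t=t_0$ and then substituting \eqref{J} for the bare $J$, as you propose, reproduces exactly the paper's bound on $\mathcal{I}_m$. The leftover term $s_0(V+G)X$, with $X=s_0W_0+s_0W_0^2$, is absorbed as the paper indicates: $s_0G\lesssim 1$, and $s_0VX\le X+s_0^2XV^2$ with $s_0^2X\lesssim 1/W_0^+\lesssim T$ because $s_0W_0\lesssim\theta_2$.
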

\begin{proof}
Fix $i\in \{1,2,\cdots,n\}.$ From the definition of $V(t)$, it suffices to estimate $w_i(t,X_i(t,z))$ for $z \notin I_0$.

Fix $z \notin I_0.$ Note that the characteristic curve $\mathcal{C}_i(z) = (t, X_i(t, z))$ may belong to different characteristic strip $R_m$ $(m\in \{1,2,\cdots,n\})$ as $t$ varies (see Figure \ref{pic:2}). Therefore, we partition the time interval $(t_0, T]$. Define
\begin{equation*}
\omega_m:=\{t_0<\tau\leq T:(\tau,X_i(\tau,z))\in R_m\},
\end{equation*}
\begin{equation*}
\omega:=\cup_{m=1}^n\omega_m,\quad\quad\quad \omega^c:=(t_0,T]\backslash \omega.
\end{equation*}
Then it is easy to see that $\omega_i$ is the empty set. And from Lemma \ref{lem:R-t0} we know that 
$$\omega_j\cap\omega_k=\varnothing, \quad\quad\quad j\neq k.$$
\begin{figure}[htpb]
\centering
\includegraphics[width=\textwidth]{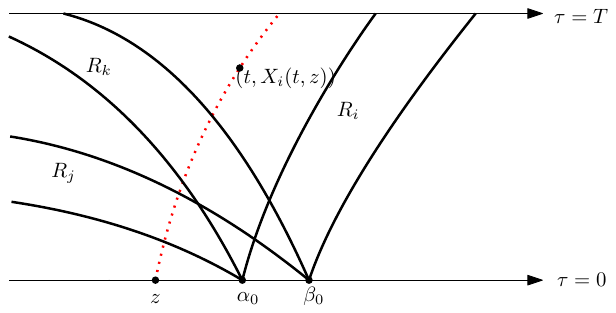}
\caption{ $\mathcal{C}_i(z)=(t,X_i(t,z))$ belonging to different  $R_m$}
\label{pic:2}
\end{figure}

As $w_i(0,z)=0,$ along the characteristic curve $\mathcal{C}_i(z)$, Lemma \ref{lem:s-eq-w} implies
\begin{equation*}
w_i(t,X_i(t,z))=\int_{0}^{t} \bigg(\sum_{j,k}\gamma_{ijk}w_jw_k+\sum_kg_{ik}w_k\bigg)(\tau,X_i(\tau,z))\mathrm{d}\tau.  
\end{equation*}
We split the integral interval into three parts:
\begin{align}
&|w_i(t,X_i(t,z))|\nonumber\\
\lesssim & \left(\int_{0}^{t_0} +\int_{\omega^c}+\sum_{m\neq i}\int_{\omega_m}\right) \bigg(\sum_{j,k}\gamma_{ijk}w_jw_k+\sum_kg_{ik}w_k\bigg)(\tau,X_i(\tau,z))\mathrm{d}\tau  \nonumber  \\
:=& \mathcal{M}_1+\mathcal{M}_2+\mathcal{M}_3. \label{M}
\end{align}

Now we estimate each term on the right-hand side of the above inequality one by one.
For $\tau\in [0,t_0]$, from Lemma \ref{lem:bound-w-t0} and $t_0\lesssim s_0$ we have
\begin{equation}\label{M1}
\mathcal{M}_1\lesssim s_0W_0^2+s_0W_0.
\end{equation}
For $\tau\in \omega^c$, from the definition of $V$ we have
\begin{equation}\label{M2}
\mathcal{M}_2\lesssim V^2T+VT.
\end{equation}
For $\tau \in \omega$, notice that $\gamma_{ijj}=0$ if $j\neq i$. Hence
\begin{align*}
\mathcal{M}_3=& \sum_{m\neq i}\int_{\omega_m} \bigg(\sum_{j,k}\gamma_{ijk}w_jw_k+\sum_kg_{ik}w_k\bigg)(\tau,X_i(\tau,z))\mathrm{d}\tau\\
\lesssim &  \sum_{m\neq i}\int_{\omega_m} \left(\sum_{j\neq m}|w_m w_j|+\sum_{j\neq m} w_j^2+G(T)|w_m|+G(T)\sum_{j\neq m}|w_j|\right)(\tau,X_i(\tau,z))\mathrm{d}\tau\\
\lesssim & (V(T)+G(T))\sum_{m\neq i}\underbrace{{\int_{\omega_m} |w_m(\tau,X_i(\tau,z))|\mathrm{d}\tau}}_{:=\mathcal{I}_m}+TV(T)(V(T)+G(T)).
\end{align*}
Here we have used the fact that
\begin{equation*}
|w_j(\tau,X_i(\tau,z)| \lesssim  V(T),\quad\quad\quad \tau\in \omega_m, \ j\neq m.
\end{equation*}

Now we focus on the estimate of $\mathcal{I}_m.$ Define the curve (see Figure \ref{pic:a})
\begin{equation*}
\tau_m:=\{(\tau,X_i(\tau,z)):\tau\in \omega_m\}.
\end{equation*}
\begin{figure}[htpb]
\centering
\includegraphics[width=0.7\textwidth]{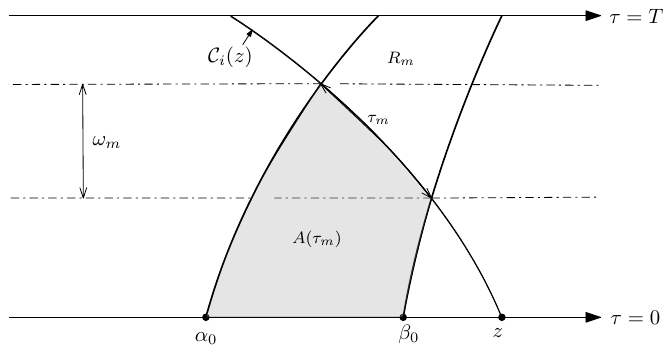}
\caption{The curve $\tau_m$ and the domain $A(\tau_m)$}
\label{pic:a}
\end{figure}
Since $\boldsymbol{A}(\boldsymbol{0})$ is strictly hyperbolic, it holds that when $|\boldsymbol{u}|\leq \delta_1$, $|\lambda_i-\lambda_m|$ has positive lower bound for all $m\neq i.$ Using Lemma \ref{lem:dw}, we have
\begin{align*}
\mathcal{I}_m\lesssim & \int_{\omega_m} |w_m(\lambda_i-\lambda_m)|(\tau,X_i(\tau,z))\mathrm{d}\tau \\
= &\int_{\tau_m} |w_m(\mathrm{d}x-\lambda_m\mathrm{d}\tau)|\\
\lesssim& \int_{\alpha_0}^{\beta_0} |w_m(0,x)| \mathrm{d}x +\int_{0}^{T}\int_{\alpha_m(\tau)}^{\beta_m(\tau) } \left|\sum_{j,k}\Gamma_{mjk}w_jw_k+\sum_{k}g_{mk}w_k\right| \mathrm{d}x\mathrm{d}\tau 
\end{align*}
One can find that the estimate of  $ \mathcal{I}_m$ is similar to that of $J(T)$ in \eqref{J}. Therefore, from \eqref{J} and \eqref{j2}, we have
\begin{equation*}
 \mathcal{I}_m\lesssim s_0^2W_0+s_0^2W_0^2 +TJ(T)(V(T)+G(T))+TS(T)V(T)(V(T)+G(T)).
\end{equation*}
Substituting the above inequality into the estimate of $\mathcal{M}_3,$ we have
\begin{multline}\label{M3}
\mathcal{M}_3\lesssim (V(T)+G(T))\left[s_0^2W_0+s_0^2W_0^2 +TJ(T)(V(T)+G(T))\right.\\
\left.+TS(T)V(T)(V(T)+G(T))\right]+TV(T)(V(T)+G(T)).
\end{multline}
The lemma is proved by substituting \eqref{M1}, \eqref{M2} and  \eqref{M3} into \eqref{M}, and using \eqref{small-theta} and $s_0W_0\leq \theta_2.$
\end{proof}



With the above results in hand, we close this subsection by giving the proof of  Proposition \ref{prop:main}.
\begin{proof}[proof of the Proposition \ref{prop:main}]
To simplify notation, we write $V(T), U(T), G(T), S(T)$ and $J(T)$ as $V,U,G,S$ and $J$. From Lemma \ref{lem:U} to Lemma \ref{lem:V}, we obtain
\begin{align}
G&\lesssim U\lesssim s_0V+TV+J,\label{1U}\\
J&\lesssim s_0(s_0W_0+s_0W_0^2)+TJ(V+G)+TSV(V+G),  \label{1J}\\
S&\lesssim s_0+TSV+TJ, \label{1S}\\
V&\lesssim s_0W_0+s_0W_0^2+TV(V+G)+T(J+SV)(V^2+G^2). \label{1V}
\end{align}

The main idea is: we first present a {\it a priori assumption} (H)
\begin{align}
TU&\leq \frac{\sqrt{s_0W_0+s_0W_0^2}}{W_0^+} , \tag{H1}\\
TV&\leq \sqrt{\frac{s_0W_0+s_0W_0^2}{W_0^+ }} , \tag{H2} \\
s_0V&\leq\sqrt{s_0(s_0W_0+s_0W_0^2)}, \tag{H3}\\
J&\leq \sqrt{\frac{s_0W_0+s_0W_0^2}{W_0^+} }, \tag{H4} 
\end{align}
then prove a stronger priori estimate (C)
\begin{align}
TU&\lesssim \frac{s_0W_0+s_0W_0^2}{(W_0^+)^2} , \tag{C1}\\
TV&\lesssim \frac{s_0W_0+s_0W_0^2}{W_0^+ } , \tag{C2} \\
s_0V&\lesssim s_0(s_0W_0+s_0W_0^2), \tag{C3}\\
J&\lesssim \frac{s_0W_0+s_0W_0^2}{W_0^+}. \tag{C4} 
\end{align}
Here all the constants appearing on the right-hand side are small by \eqref{small-theta} and Lemma \ref{lem:sW+sW2}.

By the smallness of $TV$ and $TU,$ it can be derived from \eqref{1J} that
\begin{equation}\label{j-}
J\lesssim s_0(s_0W_0+s_0W_0^2)+(TV+TU)\cdot SV.
\end{equation}
Furthermore, by the definition of $T$ in \eqref{T},
\begin{equation*}
TJ\lesssim \frac{s_0W_0+s_0W_0^2}{W_0^+}s_0+(TV+TU)\cdot TV\cdot S.
\end{equation*}
Substituting the above estimate into \eqref{1S} and using the smallness of $TV,$ $TU$ and $\frac{s_0W_0+s_0W_0^2}{W_0^+}$ yields
\begin{equation}\label{s}
S\lesssim s_0.
\end{equation}
Now we return to estimate $J$. From \eqref{2} and the lower bound of $|\boldsymbol{u} ^\prime_0(t,x)|$, we have
\begin{equation}\label{12}
s_0\leq \frac{\theta_2}{W_0^+},\quad\quad\quad s_0\leq \frac{\theta_2}{W_0}\lesssim \theta_2.
\end{equation}
Then, using \eqref{s}, \eqref{j-} can be rewritten as
\begin{align*}
J&\lesssim s_0(s_0W_0+s_0W_0^2)+(TV+TU)\cdot s_0V \\
&\lesssim \left(\theta_2+\sqrt{s_0W_0^+}+\sqrt{s_0} \right) \frac{s_0W_0+s_0W_0^2}{W_0^+}\\
&\lesssim \sqrt{\theta_2}\frac{s_0W_0+s_0W_0^2}{W_0^+}
\end{align*}
by using (H1), (H2) and (H3). Thus, (C4) holds.

Next, we estimate $V.$ From \eqref{s}, \eqref{1V} can be rewritten as
\begin{equation*}
V\lesssim s_0W_0+s_0W_0^2+(TV+TU+TV\cdot J+TV\cdot s_0V)\cdot V+(J+s_0V)TU^2.
\end{equation*}
Using the smallness of $TV,$ $TU,$ $J$ and  $s_0V$, we have
\begin{equation}\label{v-}
V\lesssim s_0W_0+s_0W_0^2+(J+s_0V)TU^2.
\end{equation}
From \eqref{12} we have $s_0\lesssim T.$ 
Substituting (C4) and \eqref{v-} into \eqref{1U} yields
\begin{align*}
G\lesssim U&\lesssim TV+J \\
&\lesssim \frac{s_0W_0+s_0W_0^2}{W_0^+}+[(TJ+s_0TV)\cdot TU]\cdot U
\end{align*}
Thus, we have
\begin{equation}\label{g-}
G\lesssim U\lesssim \frac{s_0W_0+s_0W_0^2}{W_0^+}
\end{equation}
by the smallness of $TJ$, $s_0$, $TV$ and $TU$. This means that (C1) holds.

It remains to prove (C2) and  (C3). By the smallness of $s_0TU^2,$ actually,
\begin{equation*}
s_0TU^2\lesssim (TU)^2\lesssim1,
\end{equation*}
 \eqref{v-} can be rewritten as
\begin{equation*}
V\lesssim s_0W_0+s_0W_0^2+JTU^2.
\end{equation*}
Now substituting (C4) and \eqref{g-} into the above inequality yields
\begin{equation}\label{v}
V\lesssim s_0W_0+s_0W_0^2,
\end{equation}
from which (C2) and (C3) holds.

Consequently, \eqref{bound-U}--\eqref{bound-V} hold by \eqref{s}, \eqref{g-}, \eqref{v}, (C4) and Lemma \ref{lem:sW+sW2}.
\end{proof}

\subsection{Blow-up of the solution} 
By the definition of $W_0^+,$  there exists a certain $i\in\{1,2,\cdots,n\}$  and $z_0\in I_0,$ such that $w_i(0,z_0)=W_0^+>0.$ Define
\begin{equation*}
\mathcal{W}(t)=w_i(t,X_i(t,z_0)),
\end{equation*}
then $\mathcal{W}(0)=W_0^+>0.$

For $0\leq t\leq t_0$, we have $|\mathcal{W}^\prime(t)|\lesssim W_0^2+W_0$ from Lemma \ref{lem:s-eq-w} and Lemma \ref{lem:bound-w-t0}. Hence, 
\begin{equation*}
\mathcal{W}(t_0)-W_0^+\geq -s_0(W_0^2+W_0)\geq -\theta_1W_0^+
\end{equation*}
by Lemma \ref{lem:sW+sW2}. Taking $\theta_1\leq\theta_0$ small enough, we have
\begin{equation}\label{Wt0}
\mathcal{W}(t_0)\geq \frac12 W_0^+.
\end{equation}
 
For $t_0\leq t\leq T$, we have
\begin{equation*}
\mathcal{W}^\prime(t)\geq \frac12\gamma_{iii}(\boldsymbol{0})\mathcal{W}^2-\Gamma\left(|\mathcal{W}|V+V^2\right)-G\left(|\mathcal{W}|+V\right)
\end{equation*}
from \eqref{g>0} and Lemma \ref{lem:s-eq-w}.
We firstly give the following {\it a priori assumption}: for any $t_0\leq t\leq T,$  
\begin{equation}\label{APA}
 \left\{\begin{aligned}
&\mathcal{W}(t)\geq \frac12\mathcal{W}(t_0),   \\
&V(t)\leq 4C_V\theta_1 \mathcal{W}(t). 
\end{aligned}\right.
\end{equation}
On one hand, under this assumption, one can get
\begin{align*}
\mathcal{W}^\prime(t)\geq&\frac12\gamma_{iii}(\boldsymbol{0})\mathcal{W}^2-\Gamma(\mathcal{W}V+V^2)-G(\mathcal{W}+V) \\
\geq&\left(\frac12\gamma_{iii}(\boldsymbol{0})-4C_V\theta_1(1+4C_V\theta_1)\Gamma\right)\mathcal{W}^2-G(1+4C_V\theta_1)\mathcal{W}.
\end{align*}
Taking $\theta_0$ small enough, using \eqref{Wt0}, \eqref{APA} and Proposition \ref{prop:main}, it holds that
\begin{align*}
\mathcal{W}^\prime(t) \geq &\frac14\gamma_{iii}(\boldsymbol{0})\mathcal{W}^2-2G\mathcal{W}\\
\geq&\left(\frac{1}{16}\gamma_{iii}(\boldsymbol{0})W_0^+-2G\right)\mathcal{W}\\
\geq& \left(\frac{1}{16}\gamma_{iii}(\boldsymbol{0})-2C_G\theta_0^2\right)W_0^+\mathcal{W}\\
\geq&\frac1{32}\gamma_{iii}(\boldsymbol{0})W_0^+\mathcal{W},
\end{align*}
which yields 
\begin{equation*}
\mathcal{W}(t)\geq \mathcal{W}(t_0)e^{\frac1{32}\gamma_{iii}(\boldsymbol{0})W_0^+(t-t_0)}\geq \mathcal{W}(t_0)\geq \frac12 W_0^+>0.
\end{equation*}
This means $\mathcal{W}^\prime(t)>0,$ in other words, 
\begin{equation}\label{APE1}
\mathcal{W}(t)\geq \mathcal{W}(t_0),\quad\quad\quad t_0\leq t\leq T.
\end{equation}
On the other hand, by \eqref{bound-V} and \eqref{Wt0},  we obtain 
\begin{equation*}
V(t)\leq C_V\theta_1 W_0^+\leq 2C_V\theta_1\mathcal{W}(t_0).
\end{equation*}
Using the monotonicity of $\mathcal{W}(t)$ in \eqref{APE1}, the above estimate can be rewritten as
\begin{equation}\label{APE2}
V(t)\leq 2C_V\theta_1\mathcal{W}(t),\quad\quad\quad t_0\leq t\leq T.
\end{equation}
Since the estimates \eqref{APE1} and \eqref{APE2} are stronger than the {\it a priori assumption} \eqref{APA}, we get \eqref{APE1} and \eqref{APE2} always holds by bootstrap principle.

Having the estimates \eqref{APE1} and \eqref{APE2} in hand, we now start to prove the blowup of the function $\mathcal{W}(t).$  Notice that the function $\mathcal{W}(t)$ satisfies the following differential inequality
\begin{equation*}
 \left\{\begin{aligned}
&\mathcal{W}^\prime(t)\geq  \frac14\gamma_{iii}(\boldsymbol{0})\mathcal{W}^2-2G\mathcal{W}, \quad\quad\quad t_0\leq t\leq T, \\
&  \mathcal{W}(t_0)\geq\frac12 W_0^+.
\end{aligned}\right.
\end{equation*}
One the one hand, the solution of the ordinary differential equation 
\begin{equation*}
 \left\{\begin{aligned}
&y^\prime(t)=\alpha y(t)^2-\beta y(t), \\
&y(0)=y_0>0
\end{aligned}\right.
\end{equation*}
will blow up if and only if $\alpha y_0>\beta.$ The solution of the above ODE is 
\begin{equation*}
y(t)=\frac{\beta}{\alpha-e^{\beta t}\left(\alpha-\frac{\beta}{y_0} \right) }, \quad\quad\quad 0\leq t<t_{max},
\end{equation*}
where 
\begin{equation}\label{t-max}
t_{\max}=-\frac{1}{\beta}\ln\left(1-\frac{\beta}{\alpha y_0} \right)
\end{equation}
is the life span of $y(t).$ Now taking $\alpha=\frac14\gamma_{iii}(\boldsymbol{0}),$ $\beta=2G,$ and $y_0=\frac12 W_0^+,$ then
\begin{equation}\label{1}
\frac{\beta}{\alpha y_0}=\frac{16G}{\gamma_{iii}(\boldsymbol{0})W_0^+}\leq \frac{16C_G}{\gamma_{iii}(\boldsymbol{0})}\theta_0^2<\frac12,
\end{equation}
by using Proposition \ref{prop:main} and taking $\theta_0$ small enough. Taking the bound of $\frac{\beta}{\alpha y_0}<\frac12$ and the inequality
\begin{equation*}
-\ln (1-x)< x+2x^2,\quad\quad\quad 0< x< \frac12
\end{equation*}
into account, one obtain from \eqref{t-max} and \eqref{1} that 
\begin{align*}
 t_{max}<&\frac{1}{\beta}\left(\frac{\beta}{\alpha y_0}+\frac{2\beta^2}{\alpha^2 y_0^2}\right)  \\
= & \left(1+\frac{2\beta}{\alpha y_0}  \right)\frac{1}{\alpha y_0}\\
< &   \frac{2}{\alpha y_0}=\frac{16}{ \gamma_{iii}(\boldsymbol{0})W_0^+} .
\end{align*}
On the other hand, by comparison principle, we have
\begin{equation*}
\mathcal{W}(t)\left\{\begin{aligned}
&\in \left[\frac12 W_0^+,\frac32 W_0^+\right], \quad\quad\quad &&0\leq t\leq t_0, \\
&\geq \frac{\beta}{\alpha-e^{\beta (t-t_0)}\left(\alpha-\frac{\beta}{y_0} \right) }, \quad\quad\quad &&t_0<t<t_0+t_{max}.
\end{aligned}\right.
\end{equation*}
 The life span of $\mathcal{W}(t)$ satisfies
\begin{equation*}
t_0+t_{max}<\frac{17}{ \gamma_{iii}(\boldsymbol{0})W_0^+}=:T, 
\end{equation*}
by noticing that
\begin{equation*}
t_0\lesssim s_0\leq \theta_0/W_0\leq\theta_0/W_0^+
\end{equation*}
and taking $\theta_0$ small enough.

\section{Proof of Theorem \ref{thm:2}}

This section is dedicated to the proof of Theorem \ref{thm:2}. Since the initial data have compact support, we first present a corresponding result at the beginning of this section.

\begin{lemma}\label{lem:f}
Given $\alpha_0<\beta_0$ and let $s_0=\beta_0-\alpha_0.$ Then for any smooth function $f(x)$ that is not identically zero and has compact support in $[\alpha_0,\beta_0],$ we have that
\begin{equation}\label{f}
|f(x_0)|< s_0 |f^\prime(x_0)|,
\end{equation}
where $x_0$ is a global minimum point of $f^\prime(x).$ The constant $s_0$ in the inequality \eqref{f} is the sharp (or optimal) constant, i.e., for any $0< \delta<\frac{s_0}{4},$ there exists a non-identically-zero function $f_\delta(x)$ with compact support in the interval $[\alpha_0,\beta_0]$ that satisfies 
\begin{equation}\label{conter-f}
|f_\delta(x_0)|\geq(s_0-4\delta)|f_\delta^\prime(x_0)|.
\end{equation}
\end{lemma}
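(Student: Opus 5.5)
The plan is a direct integration argument that uses only the fact that $f'$ is bounded below by $f'(x_0)$. Set $m:=f'(x_0)=\min_x f'(x)$.

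First I show $m<0$. Since $f\not\equiv 0$ and $f(\alpha_0)=f(\beta_0)=0$, we have $\int_{\alpha_0}^{\beta_0}f'\,\mathrm{d}x=0$. If $m\ge 0$, then $f'\equiv 0$ and $f\equiv 0$, which is excluded. Hence $m<0$ and $|f'(x_0)|=-m$.

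Next I integrate from each endpoint and use $f'\ge m$ on $[\alpha_0,\beta_0]$:
\begin{equation*}
f(x_0)=\int_{\alpha_0}^{x_0}f'\,\mathrm{d}x\ge m(x_0-\alpha_0),\qquad
f(x_0)=-\int_{x_0}^{\beta_0}f'\,\mathrm{d}x\le -m(\beta_0-x_0).
\end{equation*}
Together these give $|f(x_0)|\le |m|\max\{x_0-\alpha_0,\beta_0-x_0\}\le |m|s_0$.

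For the strict inequality, suppose equality held. Then $\max\{x_0-\alpha_0,\beta_0-x_0\}=s_0$, so $x_0$ is an endpoint of $[\alpha_0,\beta_0]$. At an endpoint $f(x_0)=0$, so equality would force $0=|m|s_0$, contradicting $m<0$. This proves \eqref{f}.

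For sharpness I construct $f_\delta$ by prescribing $f_\delta'=:h$ and setting $f_\delta(x)=\int_{\alpha_0}^x h$. The plan is to make $h$ a large positive bump on the left, then a long plateau at the minimum value $-1$, then a short return to $0$ on the right:
\begin{itemize}
\item on $[\alpha_0,\alpha_0+\delta]$, $h$ is a smooth nonnegative bump vanishing to infinite order at $\alpha_0$;
\item on $[\alpha_0+\delta,\alpha_0+2\delta]$, $h$ passes smoothly and monotonically from $0$ to $-1$;
\item on $[\alpha_0+2\delta,\beta_0-2\delta]$, $h\equiv-1$;
\item on $[\beta_0-2\delta,\beta_0]$, $h$ rises smoothly from $-1$ to $0$, stays in $[-1,0]$, and vanishes to infinite order at $\beta_0$.
\end{itemize}
This $h$ is smooth and $\min h=-1$. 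The bump's mass is a free parameter, and I fix it so that $\int_{\alpha_0}^{\beta_0}h=0$, i.e. $f_\delta(\beta_0)=0$; then $f_\delta$ is smooth, has support in $[\alpha_0,\beta_0]$, and is not identically zero.

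Take $x_0=\alpha_0+2\delta$, which is a global minimum point of $f_\delta'$. Since $h\le 0$ on $[x_0,\beta_0]$ and $h\equiv-1$ on $[x_0,\beta_0-2\delta]$,
\begin{equation*}
f_\delta(x_0)=-\int_{x_0}^{\beta_0}h\,\mathrm{d}x\ge -\int_{x_0}^{\beta_0-2\delta}h\,\mathrm{d}x=s_0-4\delta=(s_0-4\delta)|f_\delta'(x_0)|,
\end{equation*}
which is \eqref{conter-f}.

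The only delicate step is arranging the smooth transitions so that $-1$ really is the global minimum of $f_\delta'$, with no undershoot below $-1$. Choosing the transition profiles monotone, or at least bounded between $-1$ and $0$, handles this. The mass condition on the bump is always solvable, because the bump's integral is a free positive parameter.
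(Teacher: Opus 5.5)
Your proof is correct. The first half is essentially the paper's argument: integrate $f'\ge f'(x_0)$ from $\alpha_0$ and from $\beta_0$. The only difference is that the paper asserts each integral bound strictly, while you take non-strict bounds and recover strictness because equality would force $x_0$ to be an endpoint.

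For sharpness you take a genuinely different route. The paper builds a linear-then-cubic profile $\tilde f_\delta$ and mollifies it: slope $-m$ on $[\alpha,x_0]$, then a cubic climbing steeply back to $0$ on $[x_0,\beta]$, with $x_0$ at the right end of the plateau. You instead prescribe $f_\delta'$ directly as a smooth function (positive bump, smooth step, plateau at $-1$, gentle return), with $x_0$ at the left end of the plateau.

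Your version has two advantages. It gives smoothness for free, and it makes $x_0=\alpha_0+2\delta$ exactly a global minimum point of $f_\delta'$. After the paper's (symmetric) mollification, $f_\delta'(x_0)>-m$, because $g'>-m$ just to the right of $x_0$. So the paper has to settle for a minimum point ``near $x_0$''.

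The paper's profile has its own advantage: it satisfies $f_\delta\le 0$. That is what Case 2 of Theorem \ref{thm:2} actually uses, since it needs $u_0\le 0$ so that $a=-ku_0(z_0)\ge 0$. Your $f_\delta$ is nonnegative with $f_\delta(x_0)>0$. This does not affect the lemma as stated. If needed, pass to $-f_\delta(\alpha_0+\beta_0-x)$: its derivative is your profile reflected, with minimum $-1$ at $\beta_0-2\delta$, and the function is nonpositive.

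A minor point of exposition: for $h$ to be smooth at $\alpha_0+\delta$, take the bump supported in the open interval $(\alpha_0,\alpha_0+\delta)$, and use step profiles that are flat at their endpoints.
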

 
\begin{proof}
We firstly prove \eqref{f}. Notice that 
$$f^\prime(x)\geq f^\prime(x_0).$$
It holds that
\begin{equation*}
\alpha_0< x_0< \beta_0,
\end{equation*}
\begin{equation*}
f^\prime(x_0)<0
\end{equation*}
and
\begin{align*}
f(x_0)=&\int_{\alpha_0}^{x_0} f^\prime(y) \mathrm{d}y > (x_0-\alpha_0)f^\prime(x_0), \\
-f(x_0)=&\int_{x_0}^{\beta_0} f^\prime(y) \mathrm{d}y > (\beta_0-x_0)f^\prime(x_0)
\end{align*}
by the fact that $f(x)$ is not identically zero and has compact support in $[\alpha_0,\beta_0].$ Then, we have
\begin{equation*}
(x_0-\alpha_0)f^\prime(x_0)< f(x_0)< - (\beta_0-x_0)f^\prime(x_0),
\end{equation*}
which implies \eqref{f} holds.

Secondly, we show that the constant $s_0$ in the inequality \eqref{f} is the sharp constant. The main idea is: first construct a function  $\tilde{f}_\delta\in C^1([\alpha_0+\delta,\beta_0-\delta])$ satisfying \eqref{conter-f}, and then extend it to the whole space $\mathbb{R}$ and smooth it (e.g., by convolution) to obtain a smooth function $f_\delta(x).$

\begin{figure}[htpb]
\centering
\includegraphics[width=0.8\columnwidth]{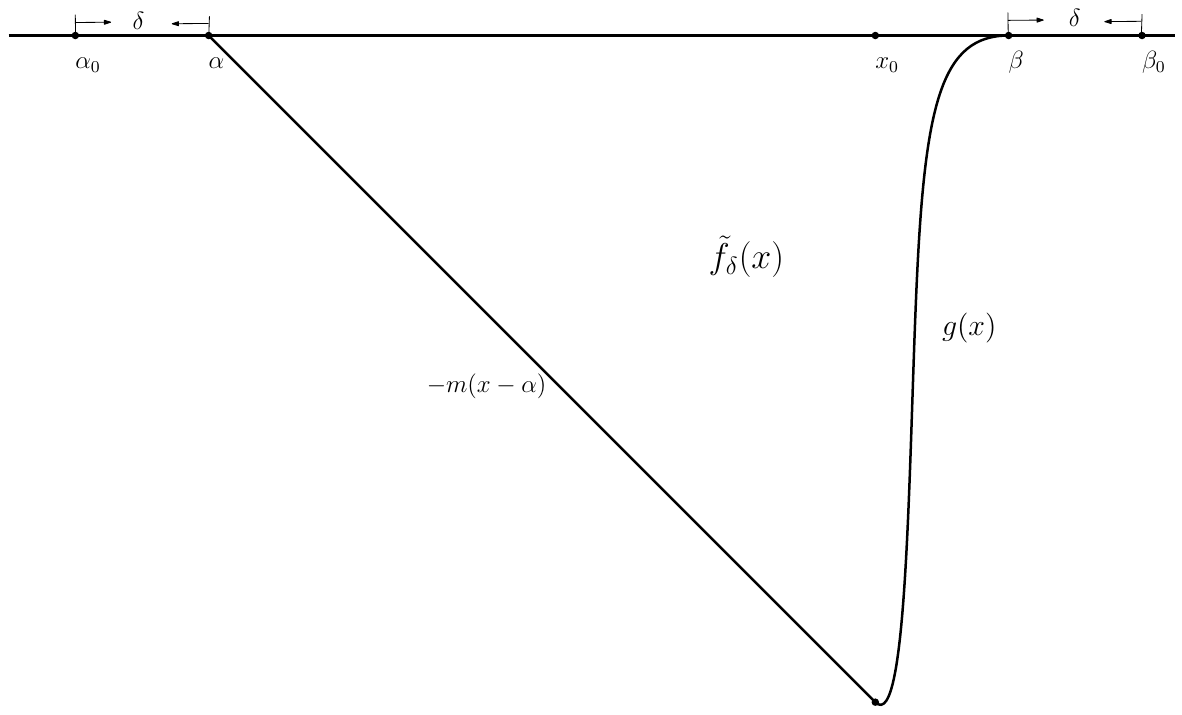} %
\caption{Image of $\tilde{f}_\delta(x)$}
\label{pic-f}
\end{figure}
Let $\alpha=\alpha_0+\delta$ and $\beta=\beta_0-\delta.$ Given constants $m>0$ and $x_0\in(\alpha,\beta)$ to be determined, the  function  $\tilde{f}_\delta$ is constructed as a piecewise function (see Figure \ref{pic-f}):
\begin{equation*}
\tilde{f}_\delta(x)=\left\{\begin{aligned}
&0,&&x\leq \alpha,\\
&-m(x-\alpha), && \alpha\leq x\leq x_0,  \\
&g(x), && x_0\leq x\leq \beta,\\
&0,&&x\geq \beta,
\end{aligned}\right.
\end{equation*}
where 
\begin{equation*}
g(x)=A(x-\beta)^3+B(x-\beta)^2,\quad\quad\quad (x_0\leq x\leq \beta)
\end{equation*}
and the constants 
\begin{align*}
A=&-m\frac{2(x_0-\alpha)+(\beta-x_0)}{(\beta-x_0)^3}<0, \\
B=&-m\frac{3(x_0-\alpha)+(\beta-x_0)}{(\beta-x_0)^2}<0
\end{align*}
are chosen such that the piecewise function $\tilde{f}_\delta\in C^1([\alpha,\beta]),$ i.e., 
\begin{align*}
-m(x_0-\alpha)=&g(x_0), \\
-m=&g^\prime(x_0).
\end{align*}
Now we prove that $x_0$ is the global minimum point of function $\tilde{f}^\prime_\delta(x),$ i.e.,
\begin{equation*}
\tilde{f}_\delta^\prime(x_0)=\inf_{\alpha\leq x\leq \beta}f_\delta^\prime(x).
\end{equation*}
To do this, by the fact that
\begin{equation*}
\tilde{f}^\prime_\delta(x)=-m,\quad\quad\quad x\in [\alpha,x_0],
\end{equation*}
it suffices to prove that
\begin{equation}\label{g}
\tilde{f}^\prime_\delta(x)=g^\prime(x)\geq -m\quad\quad\quad x\in [x_0,\beta].
\end{equation}
Notice that 
\begin{equation*}
g^\prime(x_0)=-m,\quad\quad\quad g^\prime(\beta)=0,
\end{equation*}
\begin{equation*}
g^{\prime\prime}(x_0)=6A(x_0-\beta)+2B=m\frac{6(x_0-\alpha)+4(\beta-x_0)}{(\beta-x_0)^2}>0,
\end{equation*}
\begin{equation*}
g^{\prime\prime}(\beta)=2B<0,
\end{equation*}
and 
\begin{equation*}
g^{\prime\prime\prime}(x)=6A<0.
\end{equation*}
Hence, the function $g^{\prime\prime}(x)$ has a zero at $x_1\in (x_0,\beta).$ Meanwhile, $g^\prime(x)$ is monotonically increasing on the interval $[x_0,x_1]$ and  monotonically decreasing on the interval $[x_1,\beta].$ These observations show that
\begin{equation*}
g^\prime(x)\geq g^\prime(x_0)=-m,\quad\quad\quad x\in [x_0,x_1],
\end{equation*}
and
\begin{equation*}
g^\prime(x)\geq g^\prime(\beta)=0,\quad\quad\quad x\in [x_1,\beta],
\end{equation*}
which imply that \eqref{g} holds. Hence, $x_0$ is indeed a global minimum point of function $\tilde{f}^\prime_\delta(x).$

It is easy to see that
\begin{equation}\label{tem-1}
\tilde{f}_\delta(x_0)=-m(x_0-\alpha),\quad\quad\quad \tilde{f}^\prime_\delta(x_0)=-m.
\end{equation}
Hence, if we choose $x_0=\beta_0-2\delta=\beta-\delta,$ one has
\begin{equation}\label{quotient}
\frac{|\tilde{f}_\delta(x_0)|}{|\tilde{f}^\prime_\delta(x_0)| }=s_0-3\delta.
\end{equation}

Now we smooth the function $\tilde{f}_\delta(x).$ Take the mollifier $\varphi_\varepsilon\in C_c^\infty(\mathbb{R}),$ with support in $[-\varepsilon,\varepsilon],$ satisfying $\varphi_\varepsilon\geq0$ and $\int_{\mathbb{R}}^{} \varphi_\varepsilon(x) \mathrm{d}x =1.$ Take $\varepsilon<\delta,$ and define 
\begin{equation*}
f_\delta(x)=(\tilde{f}_\delta\ast \varphi_\varepsilon)(x)=\int_{\mathbb{R}}^{} \tilde{f}_\delta(x-y)\varphi_\varepsilon(y) \mathrm{d}y.  
\end{equation*}
Then the support of $f_\delta$ is in 
\begin{equation*}
[\alpha-\varepsilon,\beta+\varepsilon]=[\alpha_0+\delta-\varepsilon,\beta_0-\delta+\varepsilon]\subset[\alpha_0,\beta_0].
\end{equation*}
Since 
\begin{align*}
\lim\limits_{\varepsilon\to0^+}|f_\delta(x_0)-\tilde{f}_\delta(x_0)|=&0, \\
\lim\limits_{\varepsilon\to0^+}|f^\prime_\delta(x_0)-\tilde{f}^\prime_\delta(x_0)|=&0,
\end{align*}
we have from \eqref{tem-1} and \eqref{quotient} that
\begin{equation*}
\frac{|f_\delta(x_0)|}{|f^\prime_\delta(x_0)|}=\frac{|(f_\delta(x_0)-\tilde{f}_\delta(x_0))+\tilde{f}_\delta(x_0)|}{|(f^\prime_\delta(x_0)-\tilde{f}^\prime_\delta(x_0))+\tilde{f}^\prime_\delta(x_0)|}\geq s_0-4\delta 
\end{equation*}
as long as $\varepsilon$ is small enough. Thus, we construct a counterexample function $f_\delta(x)$ that satisfies inequality \eqref{conter-f} and whose derivative function $f^\prime_\delta(x)$ has a global minimum point near $x_0$. This demonstrates that $s_0$ in \eqref{f} is indeed the sharp constant.
\end{proof}

Let $z_0$ be a global minimum point of $u^\prime(x)$ as in \cite{John-JDE-1979}. Then in the following proposition, we give a necessary and sufficient condition for the global existence or finite time blowup for the solution of \eqref{CL1} on the characteristic curve $X(t,z_0),$ which satisfies
\begin{equation*}
\frac{\mathrm{d}X(t,z)}{\mathrm{d}t}=u(t,X(t,z)),\quad\quad\quad X(0,z)=z.
\end{equation*}

\begin{proof}[proof of Theorem \ref{thm:2}]
Along the characteristic curve $X(t,z_0),$ let 
\begin{equation*}
U(t)=u(t,X(t,z)).
\end{equation*}
Then the function $U(t)$ satisfies a ordinary differential equation
\begin{equation*}
 \left\{\begin{aligned}
&U^\prime=kU^2, \\
&U(0)=u_0(z_0),
\end{aligned}\right.
\end{equation*}
whose solution is 
\begin{equation}\label{sol-U}
U(t)=\frac{u_0(z)}{1-ku_0(z)t}.
\end{equation}

Along the characteristic curve $X(t,z_0),$ let
\begin{equation*}
W(t)=-u_x(t,X(t,z)).
\end{equation*}
Then 
\begin{equation*}
W^\prime=W^2+2kUW.
\end{equation*}
By \eqref{sol-U}, the function $W(t)$ satisfies a ordinary differential equation
\begin{equation*}
 \left\{\begin{aligned}
&W^\prime=W^2+\frac{2ku_0(z)}{1-ku_0(z)t}W, \\
&W(0)=-u_0^\prime(z_0)=:W_0.
\end{aligned}\right.
\end{equation*}
More precisely, this is a Riccati equation. Let
\begin{equation*}
W(t)=\frac{1}{V(t)}
\end{equation*}
and 
\begin{equation*}
a=-ku_0(z),
\end{equation*}
then 
\begin{equation*}
V^\prime-\frac{2a}{1+at}V=-1,
\end{equation*}
whose solution is 
\begin{equation*}
V(t)=V(0)(1+at)^2-(1+at)t.
\end{equation*}
Hence, 
\begin{equation*}
W(t)=\frac{W_0}{(1+at)(1+(a-W_0)t)}.
\end{equation*}
 
We now proceed to a case-by-case analysis. 

Case 1: Suppose $s_0\leq k^{-1}.$ Then by \eqref{sol-U}, $U(t)$ blows up in finite time for the case $u_0(z)>0.$ When $u_0(z)\leq0,$ the solution $U(t)$ exists globally. Meanwhile, we have
\begin{equation*}
a=k|u_0(z_0)|< ks_0|u^\prime(z_0)|\leq|u^\prime_0(z_0)|=W_0,
\end{equation*}
which implies $W(t),$ or $u_x(t,x)$ blows up on characteristic curve $X(t,z_0)$, and the life span of $u_x$ is
\begin{equation*}
T_{\max}=\frac{1}{W_0-a}.
\end{equation*}
 
Case 2: Suppose $s_0> k^{-1}.$ By Lemma \ref{lem:f}, for any $0<\delta<\frac{s_0}{4}$, there exists a non-identically-zero function $u_{0}(x)$ (by convolution with the function $\tilde{f}_\delta,$ see Picture \ref{pic-f}) whose support is compact and contained in $[\alpha_0,\beta_0]$. This function attains a global minimum at a point $z_0$ and satisfies
\begin{equation*}
u_0(x)\leq0,\quad\quad\quad \forall x\in \mathbb{R}
\end{equation*}
and
\begin{equation}\label{u0}
|u_0(z_0)|\geq (s_0-4\delta)|u^\prime_0(z_0)|.
\end{equation}
Taking $\delta$ small enough, such that $\delta\leq \frac{ks_0-1}{4k},$ then by \eqref{u0} and 
\begin{equation*}
s_0-4\delta\geq k^{-1},
\end{equation*}
we have 
\begin{align*}
a=k|u_0(z_0)|\geq k(s_0-4\delta)|u_0^\prime(z_0)|\geq |u_0^\prime(z_0)|=W_0
\end{align*}
Thus, the functions $U(t)$ and $W(t)$ exist globally, i.e., $u(t,x)$ and $u_x(t,x)$ exist globally on the characteristic curve $X(t,z_0).$
\end{proof}

\section*{Acknowledgments}
Qingsong Zhao was supported by the National Natural Science Foundation of China under Grant Number 12401281.

\end{document}